\renewcommand{\abovecaptionskip}{0pt}
\renewcommand{\belowcaptionskip}{6pt}
\renewcommand{\@makecaption}[2]{
\vspace{\abovecaptionskip}%
\sbox{\@tempboxa}{#1 #2}%
\global\@minipagefalse \hbox to \hsize {{\scshape \hfil #1 #2\hfil}}
\vspace{\belowcaptionskip}}
\newcommand{\quot}{/\hspace{-0.5ex}/}
\renewcommand{\ge}{\geqslant}
\renewcommand{\le}{\leqslant}
\newcommand{\rk}{\operatorname{rk}}
\newcommand{\codim}{\operatorname{codim}}
\newcommand{\SL}{\operatorname{SL}}
\newcommand{\GL}{\operatorname{GL}}
\newcommand{\Sp}{\operatorname{Sp}}
\newcommand{\Spin}{\operatorname{Spin}}
\newcommand{\SO}{\operatorname{SO}}
\newcommand{\diag}{\operatorname{diag}}
\newcommand{\Spec}{\operatorname{Spec}}
\DeclareMathOperator{\otimesZ}{\otimes\hspace{1pt}\rule[-3pt]{0pt}{0pt}_{\mathbb{Z}}}
 \DeclareMathOperator{\Supp}{Supp}
\newtheorem{theorem}{Theorem}
\newtheorem{proposition}{Proposition}
\newtheorem{lemma}{Lemma}
\newtheorem{corollary}{Corollary}
\newtheorem*{question*}{Question}
\theoremstyle{definition}
\newtheorem{dfn}{Definition}
\newtheorem*{dfn*}{Definition}
\theoremstyle{remark}
\newtheorem{remark}{Remark}
\begin{document}

\renewcommand{\proofname}{Proof}
\renewcommand{\abstractname}{Abstract}
\renewcommand{\refname}{References}
\renewcommand{\figurename}{Figure}
\renewcommand{\tablename}{Table}

\title[Affine spherical homogeneous spaces]{Affine spherical homogeneous
spaces\\with good quotient by a maximal unipotent subgroup}

\author{Roman Avdeev}

\thanks{Partially supported by Russian Foundation for Basic Research, grant no. 09-01-00648}

\address{Chair of Higher Algebra, Department of Mechanics and Mathematics,
Moscow State University, 1, Leninskie Gory, Moscow, 119992, Russia}

\email{suselr@yandex.ru}

%\date{\today}

\subjclass[2010]{14L30, 14M27, 14M17}

\keywords{Algebraic group, homogeneous space, spherical subgroup,
equidimensional morphism, semigroup}

\begin{abstract}
For an affine spherical homogeneous space $G/H$ of a connected
semisimple algebraic group~$G$, we consider the factorization
morphism by the action on $G/H$ of a maximal unipotent subgroup
of~$G$. We prove that this morphism is equidimensional if and only
if the weight semigroup of $G/H$ satisfies some simple condition.
\end{abstract}

\maketitle

\sloppy

\section{Introduction} \label{section_intro}

Let $G$ be a connected semisimple complex algebraic group and let
$U$ be a maximal unipotent subgroup of~$G$. Consider a homogeneous
space $G/H$ with finitely generated algebra $\mathbb C[G/H] =
\mathbb C[G]^H$ of regular functions on it. In this situation, the
algebra ${}^U \mathbb C[G/H]$, which consists of regular functions
on $G/H$ that are invariant under the action of $U$ on the left, is
also finitely generated (see~\cite[Theorem~3.1]{Kh}). Therefore one
may consider the corresponding factorization morphism
$$
\pi_U \colon X = \Spec \mathbb C[G/H] \to Y = \Spec {}^U \mathbb
C[G/H].
$$

The algebra $\mathbb C[G/H]$ is a rational $G$-module with respect
to the action of $G$ on the left and decomposes into a direct sum of
finite-dimensional irreducible $G$-modules. The highest weights of
irreducible $G$-modules that occur in this decomposition form a
semigroup called the \textit{weight semigroup} of~$G/H$. We denote
this semigroup by~$\Lambda_+(G/H)$.

A subgroup $H \subset G$ (or a homogeneous space $G/H$) is said to
be \textit{spherical} if a Borel subgroup $B \subset G$ has an open
orbit in~$G/H$. For a spherical homogeneous space $G/H$, the algebra
$\mathbb C[G/H]$ is finitely generated~\cite{Kn}, therefore the
morphism $\pi_U$ is well defined. Further, it is known
(see~\cite[Theorem~1]{VK}) that for a spherical homogeneous space
$G/H$ the $G$-module $\mathbb C[G/H]$ is \textit{multiplicity free}
(the converse is also true in the case of quasi-affine $G/H$), that
is, every irreducible submodule occurs in this $G$-module with
multiplicity at most~$1$. In this situation, if we fix a highest
weight vector (with respect to~$U$) in each irreducible submodule
of\, $\mathbb C[G/H]$, then these vectors form a basis of the
algebra ${}^U \mathbb C[G/H]$ (regarded as a vector space
over~$\mathbb C$). Moreover, if we normalize these vectors in an
appropriate way, we can establish a natural isomorphism between
${}^U \mathbb C[G/H]$ and the semigroup algebra of the semigroup
$\Lambda_+(G/H)$ (see~\cite[Theorem~2]{Pop}).

Let $\omega_1, \ldots, \omega_l$ be all the fundamental weights
of~$G$. For every dominant weight $\lambda = k_1\omega_1 + \ldots +
k_l\omega_l$, $k_i \in \{0,1,2,\dots\}$, we introduce its
\textit{support} $\Supp \lambda = \{\omega_i \mid k_i > 0\}$.

A spherical homogeneous space $G/H$ is said to be \textit{excellent}
if it is quasi-affine and the semigroup $\Lambda_+(G/H)$ is
generated by dominant weights $\lambda_1, \ldots, \lambda_m$ of~$G$
satisfying $\Supp \lambda_i \cap \Supp \lambda_j = \varnothing$ for
$i \ne j$. For example, regard the (affine) spherical homogeneous
space $\SL_{2n} / \mathrm{S}(\mathrm{L}_n \times \mathrm{L}_n)$,
where $\mathrm{S}(\mathrm{L}_n \times \mathrm{L}_n)$ is the
intersection of the group $\SL_{2n}$ with the subgroup $\GL_n \times
\GL_n \subset \GL_{2n}$. Its weight semigroup is (freely) generated
by the weights $\omega_1 + \omega_{2n-1}$, $\omega_2 +
\omega_{2n-2}$, $\ldots$, $\omega_{n-1} + \omega_{n+1}$,
$2\omega_{n}$, where $\omega_i$ is the $i$th fundamental weight of
$\SL_{2n}$, hence this space is excellent. It follows from the
definition that for an excellent spherical homogeneous space $G/H$
the semigroup $\Lambda_+(G/H)$ is free. Therefore the algebra ${}^U
\mathbb C[G/H]$ is also free and the variety~$Y$, which is the
spectrum of this algebra, is just the affine space $\mathbb C^r$,
where $r$ is the rank of~$\Lambda_+(G/H)$.

A spherical homogeneous space $G/H$ is said to be \textit{almost
excellent} if it is quasi-affine\footnote{In the paper~\cite{Avd1},
the quasi-affinity condition was omitted by mistake in the
definition of an almost excellent spherical homogeneous space.} and
the convex cone $\mathbb Q_+ \Lambda_+(G/H)$ consisting of all
linear combinations of elements in $\Lambda_+(G/H)$ with nonnegative
rational coefficients is generated (as a convex cone) by elements
$\lambda_1, \ldots, \lambda_m \in \Lambda_+(G/H)$ satisfying $\Supp
\lambda_i \cap \Supp \lambda_j = \varnothing$ for $i \ne j$. As can
be easily seen (see~\cite[Corollary~1]{Avd1}), for a spherical
homogeneous space $G/H$ the property of being almost excellent is
local, that is, it depends only on the Lie algebras~$\mathfrak g$
and~$\mathfrak h$.

There is a close connection between excellent and almost excellent
spherical homogeneous spaces. First, it is obvious that every
excellent spherical homogeneous space is almost excellent. Second,
for every almost excellent spherical homogeneous space its simply
connected covering homogeneous space is excellent
(see~\cite[Theorem~3]{Avd1}).

The following theorem was in fact proved by Panyushev in~1999.

\begin{theorem} \label{excellent_ed}
Let $G/H$ be a quasi-affine spherical homogeneous space.

\textup{(a) (see \cite[Theorem~5.5]{Pan2})} If\, $G/H$ is excellent,
then the morphism $\pi_U$ is equidimensional.

\textup{(b) (see \cite[Theorem~5.1]{Pan2})} If $Y \simeq \mathbb
C^r$ for some~$r$ and $H$ contains a maximal unipotent subgroup
of~$G$, then the converse to~\textup{(a)} is also true.
\end{theorem}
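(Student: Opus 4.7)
For part (a), since $G/H$ is excellent, $\Lambda_+(G/H)$ is freely generated by weights $\lambda_1, \ldots, \lambda_m$ with pairwise disjoint supports, so $Y \simeq \mathbb{C}^m$ and ${}^U\mathbb{C}[X] = \mathbb{C}[f_1, \ldots, f_m]$ for a choice of $U$-highest weight vectors $f_i$ of weight $\lambda_i$. Every fiber of $\pi_U$ has dimension at least $\dim X - m$, so equidimensionality reduces to the claim that no fiber exceeds this bound. The plan is to exploit the action of the maximal torus $T \subset B$: it acts on $X$ and on $Y$ making $\pi_U$ equivariant, and since every $\lambda_i$ is dominant and nonzero, a suitable one-parameter subgroup of $T$ contracts $Y$ to the origin. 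Combined with upper semicontinuity of fiber dimension, this localises the problem to showing $\dim \pi_U^{-1}(0) = \dim X - m$, i.e.\ that $f_1, \ldots, f_m$ cut out a subvariety of $X$ of codimension exactly~$m$. I would then analyse each divisor of zeros $\operatorname{div}_0(f_i)$ as a positive combination of $B$-stable prime divisors on the spherical variety $X$, using the local structure theorem to identify which colors and $G$-stable prime divisors appear in terms of $\Supp \lambda_i$; the disjoint-support hypothesis should then force any two of these zero divisors to share no common $B$-stable component and to meet properly. Finite $B$-orbitality of~$X$ makes this intersection argument tractable by induction on~$m$.

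For part (b), the hypothesis that $H$ contains a maximal unipotent subgroup of~$G$ allows us to assume (after conjugation) $U \subset H$, so that $G/H$ is horospherical and the multiplication on $\mathbb{C}[G/H]$ takes the simple form $V(\lambda) \cdot V(\mu) = V(\lambda + \mu)$. In particular, $\mathbb{C}[X]$ is a free $\mathbb{C}[Y]$-module and the fibers of $\pi_U$ become explicit $G$-orbit closures on~$X$. The hypothesis $Y \simeq \mathbb{C}^r$ forces $\Lambda_+(G/H)$ to be free, say on generators $\lambda_1, \ldots, \lambda_r$. Arguing by contradiction, I would assume that two of these satisfy $\Supp \lambda_i \cap \Supp \lambda_j \ne \varnothing$ and then construct an explicit point $y \in Y$ (setting some coordinates $f_k$ to zero and the rest to generic nonzero values) whose fiber $\pi_U^{-1}(y)$ carries a component of dimension strictly greater than $\dim X - r$. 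The extra dimension should come from a Bruhat cell on the flag variety quotient $G/N_G(H)$ associated with the shared simple root: in the disjoint-support case this cell is collapsed away, whereas here the horospherical structure eliminates precisely the lower-order corrections to the multiplication that would otherwise pin the point in its fiber.

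The hardest step in both parts is converting the combinatorial disjoint-support condition on the $\lambda_i$ into a geometric transversality statement for the $\operatorname{div}_0(f_i)$ on~$X$. I expect to do this by describing the $B$-stable prime divisors of $X$ via the local structure theorem and root-theoretic data, so that the $B$-weight along each such divisor can be read off explicitly. Once this bridge is in place, the dimension bookkeeping in~(a) and the construction of the oversized fiber in~(b) should follow by direct inspection of the combinatorics.
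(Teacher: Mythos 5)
The paper does not actually prove this theorem: it is quoted from Panyushev \cite[Theorems~5.1 and~5.5]{Pan2}, so your text has to stand on its own as a proof, and it does not. In part (a), the reduction to the null fiber via the $T$-action and a contracting one-parameter subgroup is correct and standard (every nonzero dominant $\lambda_i$ pairs positively with $\sum_j\check\alpha_j$, so $0\in\overline{T\cdot y}$ for every $y\in Y$ and upper semicontinuity of fiber dimension applies). But the step you yourself flag as the hardest --- that disjointness of the supports forces $\{f_1=\dots=f_m=0\}$ to have codimension exactly $m$ in $X$ --- is the entire content of the theorem, and the mechanism you propose does not deliver it. Knowing that $\operatorname{div}_0(f_i)$ and $\operatorname{div}_0(f_j)$ share no $B$-stable prime component only gives codimension $\ge 2$ for a pairwise intersection; to get codimension $m$ you need that each $f_{i+1}$ vanishes on no irreducible component of $\{f_1=\dots=f_i=0\}$, and those components are $B$-stable subvarieties of codimension $i$ about which the divisorial (color) data of the $f_j$ says nothing directly. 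You propose no invariant that would let the claimed induction run, and you do not even argue that $\Supp\lambda_i\cap\Supp\lambda_j=\varnothing$ implies the two zero divisors have no common component. The known argument sidesteps all of this by degenerating to the horospherical contraction, where $X$ becomes the $S$-variety $\overline{G\cdot(v_{\lambda_1}+\dots+v_{\lambda_m})}$ and the null cone can be computed explicitly in terms of the parabolics stabilizing the lines $\langle v_{\lambda_i}\rangle$.

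Part (b) has, besides the same unexecuted core, a concrete error: the fibers of $\pi_U$ are not $G$-orbit closures ($G$ does not act on $Y$ compatibly and $\pi_U$ is not $G$-equivariant; generic fibers are closures of $U$-orbits). The ``oversized fiber coming from a Bruhat cell on $G/N_G(H)$ associated with the shared simple root'' is a guess at where the excess dimension might live, not a construction: no point $y$, no component of $\pi_U^{-1}(y)$, and no dimension count are produced. As it stands the proposal is a plausible research outline whose two load-bearing steps are both left as expectations; neither is proved, and in (b) one of the structural claims used to set up the argument is false.
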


\begin{remark}
When we say that a morphism is equidimensional, we have in mind that
it is surjective.
\end{remark}

\begin{remark}
In fact, Theorems 5.1 and 5.5 in~\cite{Pan2} assert more general
facts, formulated in other terms. The term `excellent spherical
homogeneous space' appeared in~2007 when Theorem~\ref{excellent_ed}
was independently reproved by E.\,B.~Vinberg and S.\,G.~Gindikin
(unpublished).
\end{remark}

In view of the above-mentioned connection between excellent and
almost excellent spherical homogeneous spaces,
Theorem~\ref{excellent_ed} implies the following similar result for
almost excellent spherical homogeneous spaces (see also
\S\,\ref{section_reformulation}).

\begin{corollary}\label{alm_exc_ed}
Let $G/H$ be a quasi-affine spherical homogeneous space. Then:

\textup{(a)} if $G/H$ is almost excellent, then the morphism $\pi_U$
is equidimensional;

\textup{(b)} if $H$ contains a maximal unipotent subgroup of~$G$,
then the converse to~\textup{(a)} is also true.
\end{corollary}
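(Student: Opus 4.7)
The strategy is to reduce both statements to Theorem~\ref{excellent_ed} by passing to the simply connected covering homogeneous space, invoking the correspondence between almost excellent and excellent spaces recalled just before that theorem. Let $\widetilde G$ denote the simply connected cover of $G$ and choose $\widetilde H \subset \widetilde G$ so that $\widetilde G / \widetilde H$ is the simply connected covering homogeneous space of $G/H$; write $\widetilde U \subset \widetilde G$ for the canonical lift of $U$, which is isomorphic to $U$ because $\widetilde G \to G$ has finite central kernel. Set $\widetilde X = \Spec \mathbb C[\widetilde G / \widetilde H]$ and $\widetilde Y = \Spec {}^{\widetilde U} \mathbb C[\widetilde G / \widetilde H]$, and denote the analogous factorization morphism by $\widetilde \pi_{\widetilde U} \colon \widetilde X \to \widetilde Y$. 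By \cite[Theorem~3]{Avd1}, $G/H$ is almost excellent if and only if $\widetilde G / \widetilde H$ is excellent.

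The first step is to check that $\pi_U$ is equidimensional precisely when $\widetilde \pi_{\widetilde U}$ is. The morphism $\widetilde X \to X$ is a finite surjective quotient by a finite group $K$ coming from the centre of $\widetilde G$. Because $K$ is central it commutes with the $\widetilde U$-action, giving ${}^U \mathbb C[G/H] = \bigl({}^{\widetilde U} \mathbb C[\widetilde G / \widetilde H]\bigr)^K$, so the induced map $\widetilde Y \to Y$ is also a finite surjective quotient by $K$. In the resulting commutative square, both horizontal arrows preserve fibre dimensions and surjectivity, so equidimensionality transfers freely between the two vertical arrows.

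With this reduction in hand, part~(a) is immediate from Theorem~\ref{excellent_ed}(a): when $G/H$ is almost excellent, $\widetilde G/\widetilde H$ is excellent, so $\widetilde \pi_{\widetilde U}$ is equidimensional, and hence so is $\pi_U$. For part~(b), the hypotheses $U \subset H$ and $\pi_U$ equidimensional lift to $\widetilde U \subset \widetilde H$ and $\widetilde \pi_{\widetilde U}$ equidimensional; it then remains to verify $\widetilde Y \simeq \mathbb C^r$ in order to apply Theorem~\ref{excellent_ed}(b) to $\widetilde G / \widetilde H$. That conclusion yields the excellence of $\widetilde G/\widetilde H$, whence the almost excellence of $G/H$ by the correspondence.

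The main obstacle is therefore the assertion $\widetilde Y \simeq \mathbb C^r$, equivalently the freeness of the weight semigroup $\Lambda_+(\widetilde G/\widetilde H)$, under the assumptions that $\widetilde G$ is simply connected and $\widetilde U \subset \widetilde H$ (the horospherical case). The natural approach is via the structure of horospherical subgroups: the normalizer $P := N_{\widetilde G}(\widetilde H)$ is a standard parabolic subgroup, $\widetilde H$ has the form $U_P \cdot L_0$ with $L_0$ a subgroup of the Levi $L \subset P$ containing $[L, L]$, and $P/\widetilde H$ is a subtorus of the central torus $L/[L,L]$. The characters of $P/\widetilde H$, extended to dominant characters of $\widetilde G$, yield the generators of $\Lambda_+(\widetilde G/\widetilde H)$; the simply connectedness of $\widetilde G$---which makes $\Lambda_+(\widetilde G)$ freely generated by the fundamental weights---then forces this subsemigroup to be free.
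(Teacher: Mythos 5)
Your reduction to the simply connected covering space and your proof of part~(a) are correct and follow essentially the same route as the paper: the commutative square of \S\,\ref{section_reformulation} transfers equidimensionality across the finite covering, and \cite[Theorem~3]{Avd1} together with Theorem~\ref{excellent_ed}(a) does the rest. The only blemish there is the claim that the deck group $K$ is central in $\widetilde G$; in general it is the finite group $p^{-1}(H)/\widetilde H$ acting by right translations, but since right translations always commute with the left $U$-action, nothing is lost.

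Part~(b), however, contains a genuine gap. You reduce it to the assertion that for a connected horospherical subgroup $\widetilde H$ of a simply connected group $\widetilde G$ the semigroup $\Lambda_+(\widetilde G/\widetilde H)$ is free, and you justify this by saying that the simple connectedness of $\widetilde G$ ``forces this subsemigroup to be free''. That is false. With $\widetilde H = U_P \cdot L_0$ as in your description, $\Lambda_+(\widetilde G/\widetilde H)$ is the intersection of the free monoid $\bigoplus_{i \in I} \mathbb Z_{\ge 0}\,\omega_i$ (for a suitable subset $I$ of the fundamental weights) with the sublattice of characters vanishing on $\widetilde H$, and the intersection of a free monoid with a sublattice need not be free. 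Concretely, take $\widetilde G = \SL_2 \times \SL_2 \times \SL_2$ and $\widetilde H = U \rtimes T_0$ with $T_0 = \{(s,s,s^{-2})\} \subset T$: this $\widetilde G/\widetilde H$ is quasi-affine and horospherical, and $\Lambda_+(\widetilde G/\widetilde H) = \{(a,b,c) \in \mathbb Z_{\ge 0}^3 : a+b=2c\}$ in the basis of fundamental weights, which is generated by $(2,0,1)$, $(1,1,1)$, $(0,2,1)$ subject to one relation, so that $\widetilde Y$ is the quadric cone $xy=z^2$ and not an affine space. Hence Theorem~\ref{excellent_ed}(b) cannot be invoked as you do. To close the gap you would either have to prove separately that equidimensionality of $\widetilde\pi_{\widetilde U}$ in the horospherical case already forces $\widetilde Y \simeq \mathbb C^r$, or appeal to the more general form of Panyushev's Theorem~5.1, which (as the Remark following Theorem~\ref{excellent_ed} indicates) is formulated in terms of the weight data rather than under the hypothesis $Y \simeq \mathbb C^r$; the latter is what the paper's terse derivation of part~(b) implicitly relies on.
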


As can be seen, Theorem~\ref{excellent_ed}(b) (resp.
Corollary~\ref{alm_exc_ed}(b)) is the converse of
Theorem~\ref{excellent_ed}(a) (resp. of
Corollary~\ref{alm_exc_ed}(a)) for spherical homogeneous
spaces~$G/H$ such that $H$ contains a maximal unipotent subgroup
of~$G$ (such subgroups $H$ are said to be \textit{horospherical}).

The goal of this paper is to establish the converse of
Theorem~\ref{excellent_ed}(a) and Corollary~\ref{alm_exc_ed}(a) in
the case of \textit{affine} spherical homogeneous spaces~$G/H$, that
is, in the case where $H$ is reductive. Namely, in this paper we
prove the following theorem.

\begin{theorem} \label{ed_exc}
Let $G/H$ be an affine spherical homogeneous space. Then:

\textup{(a)} if the morphism $\pi_U$ is equidimensional, then $G/H$
is almost excellent;

\textup{(b)} if the morphism $\pi_U$ is equidimensional and $Y
\simeq \mathbb C^r$ for some~$r$, then $G/H$ is excellent.
\end{theorem}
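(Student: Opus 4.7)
The plan is to reduce the affine case to the horospherical case, where the analogous statements are already proved by Panyushev as Theorem~\ref{excellent_ed}(b) and Corollary~\ref{alm_exc_ed}(b). First I note that part~(b) is a formal consequence of part~(a): when $Y \simeq \mathbb C^r$ the semigroup $\Lambda_+(G/H)$ is free, and a short combinatorial argument shows that any almost excellent weight semigroup which is also free is automatically excellent (the free generators must coincide, up to positive integer scalars, with the disjoint-support cone generators, so the free generators themselves have pairwise disjoint supports). Thus the main task is to establish part~(a).

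The reduction proceeds through the horospherical contraction of $X = G/H$. By sphericity one has the multiplicity-free decomposition $\mathbb C[X] = \bigoplus_{\lambda \in \Lambda_+(G/H)} V(\lambda)$. Choose a linear function $\ell$ on the weight lattice that is positive on $\Lambda_+(G/H) \setminus \{0\}$ and $\mathbb Z_{\ge 0}$-valued on it; the subspaces $F_n = \bigoplus_{\ell(\lambda) \le n} V(\lambda)$ form an exhaustive $G$-stable algebra filtration of $\mathbb C[X]$. The associated Rees construction produces a flat family $\mathcal X \to \mathbb A^1$ of affine $G$-varieties with $\mathcal X_s \simeq X$ for $s \ne 0$ and $\mathcal X_0 \simeq \hat X := \Spec(\mathrm{gr}\,\mathbb C[X])$. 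By the Vinberg--Popov theory of horospherical contractions, $\hat X = \Spec \mathbb C[G/\hat H]$ for a quasi-affine horospherical homogeneous space $G/\hat H$ satisfying $\Lambda_+(G/\hat H) = \Lambda_+(G/H)$. Because taking $U$-invariants commutes with the filtration, the family of $U$-invariants is trivial over $\mathbb A^1$, giving a $G$-equivariant morphism $\Pi \colon \mathcal X \to Y \times \mathbb A^1$ that restricts to $\pi_U$ on the generic fibre and to $\hat\pi_U \colon \hat X \to Y$ on the special fibre.

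The heart of the argument is to deduce equidimensionality of $\hat\pi_U$ from that of $\pi_U$. My preferred route is to upgrade ``equidimensional'' to ``flat'' for $\pi_U$ and then transport flatness across the Rees family. The variety $X = G/H$ is smooth and $Y$ is a normal affine toric variety, so both are Cohen--Macaulay; miracle flatness does not apply directly since $Y$ need not be smooth, but one can stratify $Y$ by its torus orbits, verify that $\pi_U$ restricted over each smooth open stratum remains equidimensional, apply miracle flatness on each stratum, and assemble the results into global flatness of $\pi_U$. Once $\pi_U$ is flat, the Rees construction immediately gives flatness --- hence equidimensionality --- of $\hat\pi_U$.

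Having established equidimensionality of $\hat\pi_U$, Corollary~\ref{alm_exc_ed}(b) applies to the quasi-affine horospherical $G/\hat H$, yielding that $G/\hat H$ is almost excellent. Since being almost excellent is a condition on the weight semigroup alone, and $\Lambda_+(G/\hat H) = \Lambda_+(G/H)$, the space $G/H$ is almost excellent, completing the proof of~(a). The principal obstacle is the flatness step: the toric base $Y$ need not be smooth, and reducing equidimensionality to flatness requires either the stratum-by-stratum argument sketched above or an alternative direct geometric control of the horospherical special fibre's null cone using the explicit structure of $\hat X$ as a $G/\hat H$-type variety.
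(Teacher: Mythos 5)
Your reduction of part~(b) to part~(a) is fine and matches the paper's own reduction in \S\,\ref{section_reformulation}: for an almost excellent space, freeness of $\Lambda_+(G/H)$ (equivalently $Y \simeq \mathbb C^r$) forces the free generators to lie on the extremal rays of the simplicial cone $\mathbb Q_+\Lambda_+(G/H)$, hence to be proportional to the disjoint-support generators. The problem is part~(a). Your plan is to transfer equidimensionality of $\pi_U$ to the horospherical contraction $\hat\pi_U \colon \hat X \to Y$ and then quote Corollary~\ref{alm_exc_ed}(b), but that transfer goes in the hard direction. In the Rees family $\Pi \colon \mathcal X \to Y\times\mathbb A^1$ the preimage of $\{y_0\}\times\mathbb A^1$ (where $y_0$ is the distinguished point of $Y$) is a closed subvariety mapping to $\mathbb A^1$ whose fibre over $s\ne 0$ is the null fibre of $\pi_U$ and whose fibre over $0$ is the null fibre of $\hat\pi_U$; semicontinuity of fibre dimension only yields $\dim\mathcal N(\hat X)\ \ge\ \dim\mathcal N(X)$, i.e.\ degeneration can only \emph{enlarge} the null cone. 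So equidimensionality of the generic fibre gives no upper bound on the fibres of $\hat\pi_U$. The implication you need is true, but only as a consequence of the theorem itself (both conditions are equivalent to almost excellence), so it cannot be obtained by a soft specialization argument.

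The flatness bridge you propose to force the transfer does not close this gap. Miracle flatness requires a regular target, and $Y=\Spec\mathbb C[\Lambda_+(G/H)]$ is in general a singular toric variety; flatness of the restrictions of $\pi_U$ over the strata of a stratification of $Y$ is essentially vacuous and does not assemble into flatness of $\pi_U$ (flatness is not a stratum-by-stratum condition). Even granting that $\pi_U$ is flat, flatness of $\Pi$ over $Y\times(\mathbb A^1\setminus\{0\})$ does not extend across $Y\times\{0\}$; that is precisely where a component of excess dimension can appear in the special fibre. This is why the paper proceeds quite differently: it reduces to the simply connected case, uses the classification to write $G/H$ in terms of strictly irreducible pieces, proves that the null fibre of $\pi_U$ is nonempty (Proposition~\ref{non_empty}) and is cut out by few equations via the symmetric torus action $H/H_0 : Y_0$ (Propositions~\ref{dim_quot} and~\ref{crucial}), and finally runs a case-by-case check on the non-excellent strictly irreducible spaces (Proposition~\ref{prop_NE_NED}). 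All of these steps produce \emph{lower} bounds on the dimension of the null fibre of $\pi_U$ itself, which is exactly the direction your degeneration argument cannot supply.
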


Doubtless, it would be interesting to understand to what extent this
result can be generalized to the case of arbitrary quasi-affine
spherical homogeneous spaces.

Theorem~\ref{excellent_ed}(a), Corollary~\ref{alm_exc_ed}(a), and
Theorem~\ref{ed_exc} imply the following geometric characterization
of excellent and almost excellent affine spherical homogeneous
spaces.

\begin{corollary}
Let $G/H$ be an affine spherical homogeneous space. Then:

\textup{(a)} $G/H$ is almost excellent if and only if the morphism
$\pi_U$ is equidimensional;

\textup{(b)} $G/H$ is excellent if and only if the morphism $\pi_U$
is equidimensional and $Y \simeq \mathbb C^r$ for some~$r$.
\end{corollary}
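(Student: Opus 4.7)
The plan is to assemble this corollary from the three preceding results, so no substantially new argument is needed beyond quoting them and recalling the semigroup-algebra description of ${}^U\mathbb{C}[G/H]$ given in the introduction.

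For part (a), I would note that the forward implication ``almost excellent $\Rightarrow$ $\pi_U$ equidimensional'' is exactly Corollary~\ref{alm_exc_ed}(a), whereas the reverse implication ``$\pi_U$ equidimensional $\Rightarrow$ almost excellent'' is Theorem~\ref{ed_exc}(a) (which is the place where the affineness of $G/H$ actually enters). Thus (a) follows immediately, with the affineness hypothesis being inherited from the statement.

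For part (b), the ``if'' direction is Theorem~\ref{ed_exc}(b), applied directly. For the ``only if'' direction, I would argue as follows: suppose $G/H$ is excellent. Since excellence implies almost excellence, part (a) already gives equidimensionality of $\pi_U$. By the definition of excellence, $\Lambda_+(G/H)$ is freely generated by weights $\lambda_1,\dots,\lambda_r$ with pairwise disjoint supports, hence it is a free commutative semigroup of some rank $r$. Combined with the isomorphism between ${}^U\mathbb{C}[G/H]$ and the semigroup algebra of $\Lambda_+(G/H)$ recalled in the introduction, this forces ${}^U\mathbb{C}[G/H]$ to be a polynomial algebra in $r$ variables, so $Y = \Spec {}^U\mathbb{C}[G/H] \simeq \mathbb{C}^r$.

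Since every step reduces to a direct citation or an immediate formal consequence, there is no real obstacle to overcome here; the only mildly substantive point is the identification $Y \simeq \mathbb{C}^r$ in the forward direction of (b), which is forced by the freeness of $\Lambda_+(G/H)$ through the semigroup-algebra description.
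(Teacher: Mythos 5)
Your proposal is correct and follows essentially the same route as the paper, which derives this corollary by directly combining Theorem~\ref{excellent_ed}(a), Corollary~\ref{alm_exc_ed}(a), and Theorem~\ref{ed_exc}; the identification $Y \simeq \mathbb{C}^r$ for excellent spaces via the freeness of $\Lambda_+(G/H)$ and the semigroup-algebra description of ${}^U\mathbb{C}[G/H]$ is exactly the observation the paper makes in the introduction.
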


The paper is organized as follows.
In~\S\,\ref{section_reformulation} we reformulate
Theorem~\ref{ed_exc} in a form that is more convenient to prove (see
Theorem~\ref{theorem_reformulation}).
In~\S\S\,\ref{section_classification}--\ref{section_ews} we collect
all the  and results needed for the proof of
Theorem~\ref{theorem_reformulation}. Namely, in
\S\,\ref{section_classification} we recall the classification of
affine spherical homogeneous spaces; in~\S\,\ref{section_zero_fiber}
we prove that under some restrictions on a homogeneous space $G/H$
the null fiber of the morphism $\pi_U$ is nonempty;
in~\S\,\ref{section_sym_act} we consider symmetric linear actions of
tori; in~\S\,\ref{section_ews} we recall the notion of the extended
weight semigroup of a homogeneous space. We prove
Theorem~\ref{theorem_reformulation} in
\S\S\,\ref{section_reduction}--\ref{section_str_irr}. More
precisely, in~\S\,\ref{section_reduction} we reduce the proof of
this theorem to the case of strictly irreducible spaces, which in
turn is considered in~\S\,\ref{section_str_irr}.

The author expresses his gratitude to E.\,B.~Vinberg for a
discussion about this paper and valuable comments.

\section{Some conventions and notation}\label{section_notation}

In this paper the base field is the field $\mathbb C$ of complex
numbers, all topological terms relate to the Zarisky topology, all
groups are assumed to be algebraic and their subgroups closed. The
Lie algebras of groups denoted by capital Latin letters are denoted
by the corresponding small German letters. For every group $L$ we
denote by $\mathfrak X(L)$ the character lattice of~$L$.

Throughout the paper, $G$ stands for a connected semisimple
algebraic group. We assume a Borel subgroup $B \subset G$ and a
maximal torus $T \subset B$ to be fixed. We denote by~$U$ the
maximal unipotent subgroup of $G$ contained in~$B$. We identify the
lattices $\mathfrak X(B)$ and $\mathfrak X(T)$ by restricting the
characters from~$B$ to~$T$.

The actions of $G$ on itself by left translation ($(g,x) \mapsto
gx$) and right translation ($(g,x) \mapsto xg^{-1}$) induce its
representations on the space $\mathbb C[G]$ of regular functions
on~$G$ by the formulae $(gf)(x) =f(g^{-1}x)$ and $(gf)(x) =f(xg)$,
respectively. For brevity, we refer to these actions as the action
\textit{on the left} and \textit{on the right}, respectively. For
every subgroup $L\subset G$ we denote by ${}^L\mathbb C[G]$ (resp.
by $\mathbb C[G]^L$) the algebra of functions in $\mathbb C[G]$ that
are invariant under the action of $L$ on the left (resp. on the
right).

Two homogeneous spaces $G_1/H_1$ and $G_2/H_2$ are said to be
\textit{locally isomorphic} if their simply connected covering
homogeneous spaces are isomorphic. This is equivalent to the
existence of an isomorphism $\mathfrak g_1 \to \mathfrak g_2$ taking
$\mathfrak h_1$ to~$\mathfrak h_2$.

Without loss of generality, for every simply connected homogeneous
space $G/H$ we assume that $G$ is simply connected and $H$ is
connected.

For a group $L$, the notation $L = L_1 \cdot L_2$ signifies that $L$
is an \textit{almost direct product} of subgroups $L_1, L_2 \subset
L$, that is, $L = L_1 L_2$, the subgroups $L_1, L_2$ elementwise
commute and the intersection $L_1 \cap L_2$ is finite.

Notation:

$e$ is the identity element of an arbitrary group;

$\mathbb C^\times$ is the multiplicative group of the field~$\mathbb
C$;

$V^*$ is the space of linear functions on a vector space~$V$;

$\Lambda_+(G) \subset \mathfrak X(B)$ is the semigroup of dominant
weights of $G$ with respect to~$B$;

$V(\lambda)$ is the irreducible $G$-module with highest weight
$\lambda \in \Lambda_+(G)$;

$v_\lambda \in V(\lambda)$ is a highest weight vector in
$V(\lambda)$ with respect to~$B$;

$\lambda^*$ is the highest weight of the irreducible $G$-module
$V(\lambda)^*$;

$L^0$ is the connected component of the identity of a group~$L$;

$L'$ is the derived subgroup of a group~$L$;

$Z(L)$ is the center of a group~$L$;

$N_L(K)$ is the normalizer of a subgroup $K$ in a group~$L$;

$\diag (a_1, \ldots, a_n)$ is the diagonal matrix of order~$n$ with
elements $a_1, \ldots, a_n$ on the diagonal.

\section{Reformulation of the main theorem}\label{section_reformulation}

In this section we reduce proving Theorem~\ref{ed_exc} to proving
the following theorem.

\begin{theorem}\label{theorem_reformulation}
Suppose that $G/H$ is a simply connected affine spherical
homogeneous space such that the morphism $\pi_U$ is equidimensional.
Then $G/H$ is excellent.
\end{theorem}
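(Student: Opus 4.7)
The plan is to follow the roadmap indicated in the introduction: reduce to a restricted subclass of spherical homogeneous spaces and then dispatch that subclass by a classification-based case analysis.

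\emph{Reduction.} First I would show that both the hypothesis and the conclusion behave well under product decompositions of $G/H$. If $G$ decomposes as an almost direct product $G = G_1 \cdot G_2$ in a way compatible with $H$, yielding a product decomposition $G/H \simeq G_1/H_1 \times G_2/H_2$, then $\Lambda_+(G/H) = \Lambda_+(G_1/H_1) \oplus \Lambda_+(G_2/H_2)$, the supports on the two factors are disjoint by construction, $\pi_U$ is equidimensional iff both factor quotient morphisms are, and excellence holds for $G/H$ iff it holds for each factor. Iterating, the theorem reduces to the case of strictly irreducible spaces. This is the content of~\S\ref{section_reduction}.

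\emph{Strategy in the strictly irreducible case.} Here I would invoke the classification of affine spherical homogeneous spaces (\S\ref{section_classification}) together with the extended weight semigroup (\S\ref{section_ews}), which gives explicit access both to the generators of $\Lambda_+(G/H)$ and to their supports. Arguing by contrapositive, assume $G/H$ is strictly irreducible and not excellent. Then by definition there exist generators $\lambda_i,\lambda_j$ of $\Lambda_+(G/H)$ with $\Supp\lambda_i\cap\Supp\lambda_j\ne\varnothing$. The aim is to exhibit a fiber of $\pi_U$ whose dimension strictly exceeds $\dim X-\dim Y$. The natural candidate is the null fiber $\pi_U^{-1}(0)$: because $H$ is reductive, \S\ref{section_zero_fiber} ensures that this fiber is nonempty, so its dimension is at least $\dim X-\dim Y$. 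To prove that this inequality is strict, one studies the $T$-action on the null fiber, and the results of~\S\ref{section_sym_act} on symmetric linear torus actions supply the required dimension estimates by controlling how many $T$-semi-invariants of prescribed weights can vanish simultaneously.

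\emph{Main obstacle.} The hard part will be the case analysis in~\S\ref{section_str_irr}. There does not appear to be a clean uniform combinatorial principle converting overlap of supports among generators of $\Lambda_+(G/H)$ into excess fiber dimension for $\pi_U$, so one almost certainly has to traverse the classification list and, for each non-excellent strictly irreducible entry, identify concrete highest-weight vectors whose simultaneous vanishing carves out an oversized subvariety of $\pi_U^{-1}(0)$. The reductivity of $H$, which distinguishes the present situation from the horospherical case already handled in~\cite{Pan2}, is simultaneously the source of the difficulty (the fibers are not linear spaces and Panyushev's $Y\simeq\mathbb C^r$ hypothesis is not available) and the crucial leverage, because it makes the null fiber well-behaved and accessible to the techniques assembled in~\S\S\ref{section_zero_fiber}--\ref{section_ews}.
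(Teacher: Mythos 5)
Your outline of the strictly irreducible case is broadly faithful to what the paper does (contrapositive, traverse the classification, exhibit an oversized null fiber using the nonemptiness result of \S\ref{section_zero_fiber} and explicit semi-invariants), and you correctly anticipate that this is an irreducible case-by-case computation. But your reduction step has a genuine gap. You assume that the theorem reduces to strictly irreducible spaces because ``both the hypothesis and the conclusion behave well under product decompositions of $G/H$,'' iterating splittings $G/H \simeq G_1/H_1 \times G_2/H_2$. The problem is that a simply connected affine spherical homogeneous space is \emph{not} in general a direct product of strictly irreducible ones: by the classification (Theorem~\ref{classification}), $H$ has the form $Z\cdot(H'_1\times\cdots\times H'_n)$ for an arbitrary connected subtorus $Z \subset Z(H_1)\times\cdots\times Z(H_q)$, and when $Z$ projects nontrivially onto several factors' centers simultaneously the space does not split. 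So the decomposition you want to iterate simply is not available as a hypothesis; it is a \emph{conclusion} that must be extracted from equidimensionality.

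This is exactly the content of Proposition~\ref{ed_product}, and proving it is where most of the machinery of \S\S\ref{section_zero_fiber}--\ref{section_ews} is actually spent: one shows (Proposition~\ref{crucial}) that equidimensionality of $\pi_U$ forces the symmetric linear action of the torus $H/H_0$ on $Y_0 = \Spec{}^U\mathbb C[G/H_0]$ to be excellent, by producing a nonempty subset of the null fiber of $X_0 \to Y$ cut out by only $c+d$ equations (this is where Proposition~\ref{non_empty} and the analysis of \S\ref{section_sym_act} enter), and then deduces that the relevant restricted characters $\tau(\chi_1),\dots,\tau(\chi_p)$ are linearly independent, which is what forces $H$ to split as $H_1\times\cdots\times H_p\times H'_{p+1}\times\cdots\times H'_n$. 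Your proposal uses the null-fiber and symmetric-torus-action tools only inside the strictly irreducible case, whereas their primary role in the paper is to make the reduction itself work; without Proposition~\ref{crucial} (or a substitute), your argument never reaches the strictly irreducible case for spaces with a ``twisted'' central torus. A secondary, smaller inaccuracy: in the strictly irreducible case with $\dim Z(H)=1$ the paper again argues via the non-excellence of the action $H/H_0 : Y_0$ rather than via a $T$-action on the null fiber, but that is a matter of implementation rather than a gap.
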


Let $G/H$ be a simply connected spherical homogeneous space. Every
homogeneous space that is locally isomorphic to $G/H$ has the form
$G/\widetilde H$, where $\widetilde H$ is a finite extension of~$H$,
that is, $\widetilde H^0 = H$. Put $\widetilde Y = \Spec {}^U
\mathbb C[G/\widetilde H]$ and consider the following commutative
diagram:
\begin{equation*}
\xymatrix{
G/H \ar[r]^{\pi_U} \ar[d] & Y \ar[d]\\
G/\widetilde H \ar[r]_{\widetilde\pi_U} & \widetilde Y}
\end{equation*}
In this diagram, the vertical arrows correspond to the factorization
morphisms by the finite group $\widetilde H/H$, which implies that
the condition that $\pi_U$ be equidimensional is equivalent to the
condition that~$\widetilde \pi_U$ be equidimensional. In view of
this, Theorem~\ref{theorem_reformulation} implies
Theorem~\ref{ed_exc}(a). Now let us deduce part~(b) of
Theorem~\ref{ed_exc} from part~(a). For that, we note that an almost
excellent spherical homogeneous space $G/H$ is excellent if and only
if the semigroup $\Lambda_+(G/H)$ is free. The latter is equivalent
to the condition that the algebra ${}^U \mathbb C[G/H]$ be free,
that is, $Y \simeq \mathbb C^r$ for some~$r$.

\section{Classification of affine spherical homogeneous spaces} \label{section_classification}

In this section we recall the known classification, up to a local
isomorphism, of all affine spherical homogeneous spaces or,
equivalently, the classification, up to an isomorphism, of all
simply connected affine spherical homogeneous spaces. Before we
proceed, let us recall some notions.

A direct product of spherical homogeneous spaces
$$
(G_1 / H_1) \times (G_2 / H_2) = {(G_1 \times G_2)} / (H_1 \times
H_2)
$$
is again a spherical homogeneous space. Spaces of this kind, as well
as spaces locally isomorphic to them, are said to be
\textit{reducible} and all others are said to be
\textit{irreducible}. A~spherical homogeneous space $G/H$ is said to
be \textit{strictly irreducible} if the spherical homogeneous space
$G/N_G(H)^0$ is irreducible (see~\cite[\S\,1.3.6]{Vin}).

Up to a local isomorphism, the list of all strictly irreducible
affine spherical homogeneous spaces $G/H$ is known: in the case of
simple~$G$ it was obtained in~\cite{Kr} and in the case of nonsimple
semisimple~$G$ it was obtained in~\cite{Mi} and, independently and
by another method, in~\cite{Br}. This list is collected in its
entirety in Tables~1 and~2 of the paper~\cite{Avd1}. (Although
different parts of this list can be found in many other papers
including the original papers mentioned above.) In its final shape,
the general procedure of obtaining arbitrary affine spherical
homogeneous spaces starting from strictly irreducible ones is given
in~\cite{Yak}\footnote{Originally a procedure of this kind was
suggested in~\cite{Mi}, however it proved to be wrong. In~\cite{Yak}
this error is pointed out and a correct version of the procedure is
given.} (see its description below).

We note that for all simply connected strictly irreducible affine
spherical homogeneous spaces $G/H$ the corresponding weight
semigroups are known. In the case of simple $G$ these semigroups
were computed in~\cite{Kr} and in the case of nonsimple
semisimple~$G$ they were computed in~\cite{Avd2}. (More precisely,
in~\cite{Avd2} the corresponding extended weight semigroups were
computed; these are defined in~\S\,\ref{section_ews}).

We subdivide the strictly irreducible affine spherical homogeneous
spaces into the following three types:

type~I: $\dim Z(H) = 1$, the space $G/H'$ is not spherical;

type~II: $\dim Z(H) = 1$, the space $G/H'$ is spherical;

type~III: $\dim Z(H) = 0$.

We now describe the general procedure of constructing arbitrary
simply connected affine spherical homogeneous spaces starting with
the strictly irreducible ones. Let $G_1/H_1$, $\ldots$, $G_n/H_n$ be
simply connected affine spherical homogeneous spaces. We recall that
the groups $G_1, \ldots, G_n$ are assumed to be simply connected and
the subgroups $H_1, \ldots, H_n$ are assumed to be connected.
Renumbering, if necessary, we may assume that for some $p,q$, where
$0 \le p \le q \le n$, the spaces $G_1/H_1, \ldots, G_p/H_p$ are of
type~I, the spaces $G_{p+1}/H_{p+1}, \ldots, G_q/H_q$ are of
type~II, and the spaces $G_{q+1}/H_{q+1}, \ldots, G_n/H_n$ are of
type~III. We put $G = G_1 \times \ldots \times G_n$, $\widetilde H =
H_1 \times \ldots \times H_n$. Clearly, $Z(\widetilde H) = Z(H_1)
\times \ldots \times Z(H_n)$ and $\widetilde H' = H'_1 \times \ldots
\times H'_n$. Further, for all $i = 1, \ldots, n$ we have $\rk
\mathfrak X(H_i) = \dim Z(H_i)$, whence $\mathfrak X(\widetilde H)
\simeq \mathfrak X(H_1) \oplus \ldots \oplus \mathfrak X(H_q)$ is a
lattice of rank~$q$. Let $\chi_1, \ldots, \chi_p$ denote the images
in $\mathfrak X(\widetilde H)$ of basis elements of the lattices
$\mathfrak X(H_1), \ldots, \mathfrak X(H_p)$, respectively.

Let $Z$ be a connected subgroup of $Z(\widetilde H)^0 = Z(H_1)
\times \ldots \times Z(H_q)$. Put $H = {Z \cdot (H'_1 \times \ldots
\times H'_n)} \subset \widetilde H$. Then we can consider the
character restriction map $\tau: \mathfrak X(\widetilde H) \to
\mathfrak X(H)$.

\begin{theorem}[{\cite[Theorem~3, Lemma~5]{Yak}}] \label{classification}
\textup{(a)} The space $G/H$ obtained using the above procedure is
spherical if and only if the characters $\tau(\chi_1), \ldots,
\tau(\chi_p)$ are linearly independent in~$\mathfrak X(H)$.

\textup{(b)} Every simply connected affine spherical homogeneous
space is isomorphic to one of the spaces~$G/H$ obtained using the
above procedure.
\end{theorem}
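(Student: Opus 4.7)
My plan is to prove parts~(a) and~(b) by separate arguments, unified by Frobenius reciprocity and the multiplicity-free characterization of sphericality for affine homogeneous spaces. For~(a), I~would translate sphericality into the condition $\dim V(\lambda)^H \le 1$ for every $\lambda \in \Lambda_+(G)$ and compute this dimension via the product structure, reducing the question to linear independence of the $\tau(\chi_i)$. For~(b), I~would canonically decompose a general simply connected affine spherical $G/H$ into strictly irreducible building blocks.

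For~(a), writing $G = G_1 \times \ldots \times G_n$, one has $V(\lambda) = V(\lambda_1) \otimes \ldots \otimes V(\lambda_n)$, and since $\widetilde H' \subset H$ the $\widetilde H'$-invariants factor:
$$
V(\lambda)^{\widetilde H'} = V(\lambda_1)^{H_1'} \otimes \ldots \otimes V(\lambda_n)^{H_n'}.
$$
One recovers $V(\lambda)^H$ by taking $Z$-invariants. For each type~II and type~III factor, sphericality of $G_i/H_i'$ (or the equality $H_i = H_i'$) gives $\dim V(\lambda_i)^{H_i'} \le 1$, contributing no freedom. For each type~I factor, the one-dimensional central torus $Z(H_i)^0$ acts on $V(\lambda_i)^{H_i'}$, and by the standard description of sphericality via the extended weight semigroup (see~\S\,\ref{section_ews}) every $Z(H_i)^0$-weight space in $V(\lambda_i)^{H_i'}$ is at most one-dimensional; the weights appearing are integer multiples of the generator $\chi_i$ of $\mathfrak X(H_i)$. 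Hence $V(\lambda)^{\widetilde H'}$ splits into at-most-one-dimensional $Z(\widetilde H)^0$-weight spaces indexed by tuples $(k_1, \ldots, k_p)$ of admissible type~I multiplicities, and the $Z$-invariant subspace is spanned by those tuples for which $k_1\tau(\chi_1) + \ldots + k_p\tau(\chi_p)$ equals a fixed element of $\mathfrak X(H)$ determined by~$\lambda$. The multiplicity bound for every~$\lambda$ is therefore equivalent to the linear independence of $\tau(\chi_1), \ldots, \tau(\chi_p)$ in $\mathfrak X(H)$, proving~(a).

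For part~(b), given a simply connected affine spherical $G/H$ (so $G$ is simply connected semisimple and $H$ is connected reductive), I~would write $H = Z(H)^0 \cdot H'$ almost directly and consider the decomposition of $H'$ into simple normal factors. Grouping the simple factors of $G$ and of $H'$ together whenever a simple factor of $H'$ acts non-trivially on a common simple factor of $G$, one obtains a compatible product decomposition $G = G_1 \times \ldots \times G_n$ and a grouping $H' = H_1^\sharp \times \ldots \times H_n^\sharp$ with each $H_i^\sharp \subset G_i$. Enlarging each $H_i^\sharp$ canonically by an appropriate central subtorus of $G_i$ produces $H_i \subset G_i$ with $G_i/H_i$ strictly irreducible spherical of type~I, II, or~III, while $Z(H)^0 \subset \prod Z(H_i)^0 = Z(\widetilde H)^0$ plays the role of~$Z$. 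The hard part will be precisely this canonical enlargement step: it is here that the original construction of~\cite{Mi} contained an error corrected in~\cite{Yak}, and verifying it requires a detailed case-by-case inspection of the lists of strictly irreducible affine spherical spaces from~\cite{Kr, Mi, Br} (tabulated in~\cite{Avd1}), along with a check that no other spherical reductive enlargement of $H_i^\sharp$ inside $G_i$ is possible. Once this is in place, the remaining verification reduces to bookkeeping with characters and weight lattices.
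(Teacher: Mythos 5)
First, a point of reference: the paper does not prove Theorem~\ref{classification} at all --- it is imported verbatim from \cite{Yak} (Theorem~3 and Lemma~5 there), and part~(b) rests in turn on the Kr\"amer--Mikityuk--Brion classification of strictly irreducible affine spherical spaces \cite{Kr,Mi,Br}, tabulated in \cite{Avd1}. So there is no internal proof to compare you against; your proposal must stand on its own, and as it stands it has genuine gaps in both parts.

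In part~(a) the skeleton is right: the factorization $V(\lambda)^{\widetilde H'}=\bigotimes_i V(\lambda_i)^{H_i'}$, the one-dimensionality of the $Z(H_i)^0$-weight spaces (which is exactly the multiplicity-one statement encoded by $\widehat\Lambda_+(G_i/H_i)$, cf.~\S\,\ref{section_ews}), and the indexing of weight lines by tuples $(k_1,\dots,k_p)$ are all correct, and they give the implication ``independent $\Rightarrow$ spherical'' cleanly. But the word ``therefore'' in your last sentence conceals the entire converse. If the $\tau(\chi_i)$ are dependent you must \emph{exhibit} a $\lambda$ carrying two distinct admissible tuples, and two issues arise. (i)~Since you test sphericality only via $\dim V(\lambda)^H\le 1$, two tuples with \emph{equal} restriction to $H$ do not suffice unless that common restriction is $0$; you should instead use the full Vinberg--Kimel'fel'd criterion ($\dim V(\lambda)^{(H)}_{\chi}\le 1$ for every $\chi\in\mathfrak X(H)$), for which equal nonzero restrictions already violate sphericality. (ii)~A dependence $\sum c_i\tau(\chi_i)=0$ must be realized by admissible multiplicities: for each type~I factor one picks $\lambda_i^{(0)}$ with two admissible multiplicities differing by some $d_i>0$ (such $\lambda_i^{(0)}$ exists precisely because $G_i/H_i'$ is affine and not spherical), rescales the relation so each $c_i$ is a multiple of $d_i$, and uses the semigroup property of $\widehat\Lambda_+(G_i/H_i)$ on multiples of $\lambda_i^{(0)}$. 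None of this is in your text, so (a) is only half proved.

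Part~(b) is a plan, not a proof, and you say so yourself. The statement that every simply connected affine spherical $G/H$ arises from the procedure \emph{is} the classification theorem: the ``canonical enlargement'' of each $H_i^\sharp$ to a strictly irreducible spherical $H_i\subset G_i$ with $\dim Z(H_i)\le 1$, the fact that simple factors of $G$ not met by $H'$ can only contribute $\SL_2/\mathbb C^\times$, the verification that the grouped pieces really are strictly irreducible, and the placement of $Z(H)^0$ inside $\prod Z(H_i)^0$ all require the case-by-case lists, which you defer entirely. (Minor but real slip: you enlarge $H_i^\sharp$ by a ``central subtorus of $G_i$''; $Z(G_i)$ is finite, so you mean a subtorus of $G_i$ centralizing $H_i^\sharp$, i.e.\ sitting in $N_{G_i}(H_i^\sharp)$.) Deferring the hard step is reasonable for a result the paper itself only cites, but it means the proposal does not constitute a proof of~(b).
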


\section{The null fiber of the morphism~$\pi_U$} \label{section_zero_fiber}

The main result in this section is Proposition~\ref{non_empty}.

The following isomorphism of $(G \times G)$-modules is well known:
\begin{equation} \label{relation}
\mathbb C[G] \simeq \bigoplus \limits_{\lambda \in \Lambda_+(G)}
V(\lambda) \otimes V(\lambda^*),
\end{equation}
where on the left-hand side the group $G \times G$ acts on the left
and on the right and in each summand on the right-hand side the left
(resp. right) factor of $G \times G$ acts on the left (resp. right)
tensor factor. Under this isomorphism, an element $v \otimes w \in
V(\lambda) \otimes V(\lambda^*)$ corresponds to the function in
$\mathbb C[G]$ whose value at an element $g \in G$ is $\langle
g^{-1}v, w \rangle = \langle v, gw \rangle$, where
$\langle\,\cdot\,,\,\cdot\, \rangle$ is the natural pairing between
$V(\lambda)$ and~$V(\lambda^*) \simeq V(\lambda)^*$.

As can be easily seen, for a fixed subgroup~$H \subset G$ the
subspace ${}^U \mathbb C[G/H]_\lambda \subset {}^U \mathbb C[G/H]$,
consisting of all $T$-semi-invariant functions of weight~$\lambda$,
corresponds to the subspace $\langle v_\lambda \rangle \otimes
V(\lambda^*)^H \subset V(\lambda) \otimes V(\lambda^*)$ under
isomorphism~(\ref{relation}). In particular, this implies that for
an element $\lambda \in \Lambda_+(G)$ the condition $\lambda \in
\Lambda_+(G/H)$ is equivalent to the condition that the subspace
$V(\lambda^*)^H \subset V(\lambda^*)$, consisting of all
$H$-invariant vectors, is nontrivial.

For an arbitrary homogeneous space $G/H$ we set $\mathcal N(G/H)$ to
be the subset of $G/H$ defined by the vanishing of all functions in
${}^U \mathbb C[G/H]$ that are $T$-semi-invariant and of nonzero
weight.

\begin{proposition} \label{non_empty}
Suppose that $G = G_1 \times \ldots \times G_s$, where each of the
groups $G_i$ is simple. Let $H \subset G$ be a subgroup such that
for every simple component $G_i$ of\, $G$ the projection of $H^0$ to
$G_i$ is not unipotent. Then the set $\mathcal N(G/H) \subset G/H$
is nonempty.
\end{proposition}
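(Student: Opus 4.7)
I would prove the proposition by explicitly constructing a point of $\mathcal{N}(G/H)$ using the isomorphism~(\ref{relation}).

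First reduce to the case $H=H^{0}$ connected: the finite cover $\pi\colon G/H^{0}\to G/H$ pulls non-zero-weight $T$-semi-invariants back to non-zero-weight $T$-semi-invariants of the same weight, so $\pi(\mathcal{N}(G/H^{0}))\subset\mathcal{N}(G/H)$ and non-emptiness transfers. Via~(\ref{relation}), the condition $gH\in\mathcal{N}(G/H)$ is that $\langle g^{-1}v_{\mu},w\rangle=0$ for every $\mu\in\Lambda_{+}(G/H)\setminus\{0\}$ and every $w\in V(\mu^{*})^{H}$. Setting $V(\mu)^{\circ}:=(V(\mu^{*})^{H})^{\perp}\subset V(\mu)$ (the $H$-stable annihilator under the canonical pairing) and $U_{\mu}:=\{g\in G:g^{-1}v_{\mu}\in V(\mu)^{\circ}\}$, we obtain $\mathcal{N}(G/H)=\bigl(\bigcap_{i}U_{\mu_{i}}\bigr)/H$ for a finite generating set $\mu_{1},\ldots,\mu_{k}$ of $\Lambda_{+}(G/H)\setminus\{0\}$.

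Next I would verify that each $U_{\mu}$ is non-empty. Assuming $U_{\mu}=\varnothing$, the orbit $Gv_{\mu}$ avoids $V(\mu)^{\circ}$; combined with the fact that $Gv_{\mu}$ spans $V(\mu)$, this forces $H$ to act trivially on $V(\mu)$, so $H\subset\ker(G\to\GL(V(\mu)))$. Writing $\mu=\mu_{1}+\dots+\mu_{s}$ along the decomposition $G=\prod G_{i}$, this kernel equals $\prod_{i:\mu_{i}=0}G_{i}$ modulo the finite centre of $G$; the non-triviality of each projection $p_{i}(H^{0})$ furnished by the hypothesis then forces $\mu_{i}=0$ for every $i$, contradicting $\mu\ne 0$.

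The crux is to produce a single $g_{0}\in\bigcap_{i}U_{\mu_{i}}$. Let $S\subset H^{0}$ be a maximal torus, conjugated so that $S\subset T$; by hypothesis $S$ projects non-trivially to every simple factor. Since $V(\mu^{*})^{H}\subset V(\mu^{*})^{S}$ and the $-\mu$ weight space of $V(\mu^{*})^{S}$ is zero whenever $\mu|_{S}\ne 0$, the identity coset $eH$ already lies in $U_{\mu}/H$ for every $\mu$ with $\mu|_{S}\ne 0$. The remaining, technically most delicate case consists of the residual weights $\mu\in\Lambda_{+}(G/H)\setminus\{0\}$ with $\mu|_{S}=0$: here the torus-based argument at $eH$ fails, and I would exploit the product decomposition $G=\prod G_{i}$ together with the non-triviality of each $p_{i}(S)$ to construct $g_{0}$ factor by factor, using that for each residual $\mu$ at least one component $\mu_{i}\ne 0$ provides a non-trivial $H$-invariant-perpendicular subspace in $V(\mu_{i})$ into which a suitable $g_{i}\in G_{i}$ can place the image of $v_{\mu_{i}}$. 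Coordinating these factor-wise adjustments simultaneously for all residual weights --- and verifying that the resulting element of $G$ lies in every $U_{\mu_{i}}$ --- is where I anticipate the main obstacle.
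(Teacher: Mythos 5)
Your overall strategy --- placing a maximal torus $S\subset H^{0}$ inside $T$ and showing that the identity coset annihilates the relevant semi-invariants --- is the same as the paper's, but your plan stalls exactly where the paper's key idea enters, and the workaround you sketch for the ``residual'' weights is both unnecessary and unlikely to succeed as described. The point you are missing is that the residual case $\mu|_{S}=0$ can be made \emph{empty} by choosing the conjugation correctly. Concretely: the non-unipotence hypothesis means the projection of $\mathfrak t_{H}$ to each $\mathfrak t_{i}=\mathfrak t\cap\mathfrak g_{i}$ is nonzero, so (the complement of a finite union of proper subspaces being nonempty) there is a rational element $z\in\mathfrak t_{\mathbb Q}\cap\mathfrak t_{H}$ whose projection to every $\mathfrak t_{i}$ is nonzero. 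Identifying $\mathfrak t_{\mathbb Q}$ with $\mathfrak X(T)\otimes_{\mathbb Z}\mathbb Q$ via a Weyl-invariant inner product, $z$ lies in some Weyl chamber; conjugating $H$ further by a suitable element of $N_{G}(T)$ one may assume $z$ is dominant. The standard fact that $(\omega,\omega')\ge 0$ for fundamental weights, with equality only for weights of distinct simple factors, then gives $(z,\mu)>0$ for \emph{every} $\mu\in\Lambda_{+}(G)\setminus\{0\}$, hence $\mu(z)\ne 0$ and $\mu|_{S}\ne 0$; your torus argument at $eH$ therefore applies to every nonzero weight and the proof is finished. Without this positioning step the residual set can genuinely be nonempty, and the factor-by-factor construction you gesture at would have to satisfy infinitely many conditions simultaneously; you correctly identify this as the main obstacle, but you do not overcome it, so as written the proof is incomplete.

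Two smaller points. Your claim that $U_{\mu}=\varnothing$ would force $H$ to act trivially on $V(\mu)$ is not justified: the orbit $Gv_{\mu}$ avoiding the proper subspace $V(\mu)^{\circ}$ implies nothing of the sort. Likewise, replacing $\mathcal N(G/H)$ by the common zero locus of semi-invariants attached to a finite generating set of $\Lambda_{+}(G/H)$ is unjustified, since a semi-invariant of weight $\mu_{1}+\mu_{2}$ need not lie in the ideal generated by those of weights $\mu_{1}$ and $\mu_{2}$. Neither issue matters once the positioning argument above is in place, because one then handles all nonzero weights uniformly at the single point $eH$.
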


\begin{proof}
For $i = 1, \ldots, s$ we put $T_i = T \cap G_i$ so that $T_i$ is a
maximal torus in~$G_i$ and $T = T_1 \times \ldots \times T_s$. We
identify $\mathfrak X(T)$ with a sublattice in $\mathfrak t^*$ by
taking each character $\chi \in \mathfrak X(T)$ to its differential
$d\chi \in \mathfrak t^*$. We regard the rational subspace
$\mathfrak t^*_{\mathbb Q} = \mathfrak X(T) \otimesZ \mathbb Q
\subset \mathfrak t^*$ and fix an inner product $(\cdot\,,\,\cdot)$
on it invariant under the Weyl group $W = N_G(T)/T$. By means of
this inner product we identify $\mathfrak t^*_{\mathbb Q}$ with the
rational subspace
$$
\mathfrak t_{\mathbb Q} = \{x \in \mathfrak t \mid \xi(x) \in
\mathbb Q \text{ for all } \xi \in \mathfrak t^*_{\mathbb Q}\}
\subset \mathfrak t.
$$

Replacing $H$ by a conjugate subgroup, we may assume that the
subgroup $T_H = {(T \cap H)^0} \subset H$ is a maximal torus of~$H$.
We note that $\dim_{\mathbb Q} (\mathfrak t_{\mathbb Q} \cap
\mathfrak t_H) = \dim_{\mathbb C} \mathfrak t_H$. This together with
the hypothesis implies that there exists an element $z \in \mathfrak
t_{\mathbb Q} \cap \mathfrak t_H$ whose projection to each of the
subspaces $\mathfrak t_1, \ldots, \mathfrak t_s$ is nonzero. The
element $z$, regarded as an element of $\mathfrak X(T) \otimesZ
\mathbb Q$, is contained in a Weyl chamber $C \subset \mathfrak X(T)
\otimesZ \mathbb Q$. Again replacing $H$ by a conjugate subgroup
(conjugate by a suitable element of~$N_G(T)$), without loss of
generality we may assume that $C$ is the dominant Weyl chamber.

Suppose that $\lambda \in \Lambda_+(G) \backslash \{0\}$. Regard the
irreducible $G$-module $V(\lambda^*)$, a lowest weight vector
$w_{\lambda^*}$ of it, and a $T$-invariant subspace~$V'(\lambda^*)$
complementary to~$w_{\lambda^*}$. Let $\nu$ be a linear function on
$V(\lambda^*)$ taking $w_{\lambda^*}$ to a nonzero value and
vanishing on~$V'(\lambda^*)$. Then $\nu$ is a highest weight vector
of the irreducible $G$-module $V(\lambda^*)^* \simeq V(\lambda)$.
Let $f \in {}^U \mathbb C[G/H]$ be a $T$-semi-invariant function of
weight~$\lambda$. Under isomorphism~(\ref{relation}), it corresponds
to a vector $v_f \in V(\lambda^*)^H$ such that $f(g) = \nu(g v_f)$
for all $g \in G$. Let us show that $v_f \in V'(\lambda^*)$. For
that, it suffices to check that the vector $w_{\lambda^*}$ is not
$T_H$-invariant. The latter will be fulfilled if we show that $z
w_{\lambda^*} \ne 0$. We have $z w_{\lambda^*} = (z, -
(\lambda^*)^*) w_{\lambda^*} = - (z, \lambda) w_{\lambda^*}$. Recall
the following well-known fact: every two fundamental weights
$\omega$, $\omega'$ of~$G$ satisfy the inequality $(\omega,\omega')
\ge 0$, and the equality is attained if and only if $\omega$,
$\omega'$ are fundamental weights of different simple factors
of~$G$. Since $\lambda \ne 0$, $z \in C$, and the projection of $z$
to each of the subspaces $\mathfrak t_1, \ldots, \mathfrak t_s$ is
nonzero, in view of the above fact, we obtain $(z, \lambda)
> 0$, whence $z w_{\lambda^*} \ne 0$ and $v_f \in V'(\lambda^*)$.
Therefore $f(eH) = 0$. Since $\lambda$ and $f$ are arbitrary, we
obtain $eH \in \mathcal N(G/H)$.
\end{proof}

Let $G/H$ be a homogeneous space such that the algebra $\mathbb
C[G/H]$ is finitely generated. The \textit{null fiber} of the
morphism $\pi_U$ is the subset of $X$ defined by the vanishing of
all the functions in ${}^U \mathbb C[G/H]$ that are
$T$-semi-invariant and of nonzero weight.

\begin{corollary} \label{fiber_non_empty}
In the assumptions of Proposition~\textup{\ref{non_empty}} suppose
that the algebra $\mathbb C[G/H]$ is finitely generated. Then the
null fiber of $\pi_U$ is nonempty and intersects~$G/H$.
\end{corollary}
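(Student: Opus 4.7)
The plan is to deduce this corollary directly from Proposition~\ref{non_empty}, transporting its conclusion from $G/H$ to the affine variety $X = \Spec \mathbb{C}[G/H]$. Since $\mathbb{C}[G/H]$ is finitely generated, evaluation of regular functions provides a canonical $G$-equivariant morphism $\phi \colon G/H \to X$ whose comorphism $\phi^* \colon \mathbb{C}[X] \to \mathbb{C}[G/H]$ is the tautological identification. In particular, $\phi^*$ restricts to an isomorphism between ${}^U\mathbb{C}[X]$ and ${}^U\mathbb{C}[G/H]$ that respects the decomposition into $T$-weight components.

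The key observation I would then record is that the null fiber $F \subset X$ of $\pi_U$ and the set $\mathcal{N}(G/H) \subset G/H$ are cut out by the simultaneous vanishing of the same collection of regular functions---namely, the $T$-semi-invariants of nonzero weight in ${}^U\mathbb{C}[G/H]$---viewed as functions on $X$ on the one hand and on $G/H$ on the other. This immediately yields the identity $\phi^{-1}(F) = \mathcal{N}(G/H)$.

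By Proposition~\ref{non_empty}, the set $\mathcal{N}(G/H)$ is nonempty, and hence so is $\phi^{-1}(F)$. This proves both assertions of the corollary at once: $F$ itself is nonempty, and $F$ meets $G/H$ (viewed inside $X$ via~$\phi$). No substantial obstacle arises; the corollary is essentially a restatement of Proposition~\ref{non_empty} in the language of the affine model $X$ and its quotient morphism $\pi_U$, the only verifications needed being the standard compatibilities of $\phi^*$ with the $U$-action and the $T$-grading, both of which follow from $G$-equivariance of~$\phi$.
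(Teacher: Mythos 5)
Your proposal is correct and matches the paper's (implicit) reasoning: the paper offers no separate proof because the null fiber of $\pi_U$ and the set $\mathcal N(G/H)$ are cut out by the very same functions, so the corollary is immediate from Proposition~\ref{non_empty}. Your identification $\phi^{-1}(F)=\mathcal N(G/H)$ via the canonical morphism $G/H\to X$ is exactly the intended argument.
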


\section{Symmetric linear actions of tori} \label{section_sym_act}

Suppose we are given a linear action of a quasi-torus $S$ on a
vector space~$V$. We recall that the notation $V \quot S$ stands for
the categorical quotient for the action $S : V$, that is, $V \quot S
= \Spec \mathbb C[V]^S$. We also recall that the null fiber (that
is, the fiber containing zero) of the factorization morphism $V \to
V \quot S$ is said to be the null cone.

For every character $\chi \in \mathfrak X(S)$ we denote by~$V_\chi$
the weight subspace in~$V$ of weight $\chi$ with respect to~$S$. We
put $\Phi = \{\chi \in \mathfrak X(S) \mid V_\chi \ne 0\} \backslash
\{0\} \subset \mathfrak X(S)$. Then, evidently, we have $V = V_0
\oplus \bigoplus \limits_{\chi \in \Phi} V_\chi$.

\begin{dfn}
The linear action $S : V$ is said to be \textit{symmetric} if $\dim
V_\chi = \dim V_{-\chi}$ for every $\chi \in \Phi$.
\end{dfn}

For the rest of this section, we assume that $S$ is a torus and the
action $S : V$ is linear and symmetric.

Put $c = \dim V_0$ and $d = (\dim V - c)/2$. For the action $S : V$
there are (not necessarily different) elements $\chi_1, \ldots,
\chi_d \in \mathfrak X(S)$ such that there is an $S$-module
isomorphism $V \simeq V_0 \oplus \bigoplus \limits_{i = 1}^d
(\mathbb C_{\chi_i} \oplus \mathbb C_{-\chi_i})$, where for $\chi
\in \mathfrak X(S)$ we take $\mathbb C_\chi$ to be the
one-dimensional $S$-module on which $S$ acts by the
character~$\chi$.

\begin{dfn}
The action $S:V$ is said to be \textit{excellent} if the elements
$\chi_1, \ldots, \chi_d$ are linearly independent in~$\mathfrak
X(S)$.
\end{dfn}

The description of linear actions of tori such that the
factorization morphism is equidimensional (see~\cite[\S\,8.1]{VP})
implies that the action $S : V$ is excellent if and only if the
factorization morphism $V \to V \quot S$ is equidimensional.
Further, using the method of supports (see~\cite[\S\,5.4]{VP}), it
is easy to show that the dimension of the null cone of the action $S
: V$ is equal to~$d$. As the dimension of an arbitrary fiber of the
morphism $V \to V \quot S$ does not exceed that of the null cone
(see~\cite[Corollary~1 from Proposition~5.1]{VP}), putting what we
have said above together with the theorem on dimensions of fibers of
a dominant morphism imply the following proposition.

\begin{proposition} \label{dim_quot}
We have $\dim V \quot S \ge c + d$, and the equality is attained if
and only if the action $S:V$ is excellent.
\end{proposition}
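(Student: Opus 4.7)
The plan is to assemble the proposition from the three ingredients cited in the paragraph immediately preceding the statement: (i) excellence of $S:V$ is equivalent to equidimensionality of the factorization morphism $\pi\colon V \to V \quot S$; (ii) the null cone of $S:V$ has dimension exactly $d$; and (iii) every fiber of $\pi$ has dimension at most that of the null cone. Together with the theorem on fiber dimensions of a dominant morphism, these give everything one needs.

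First I would establish the inequality $\dim V \quot S \ge c+d$. Since $S$ is reductive, $\pi$ is surjective, so by the fiber-dimension theorem every fiber of $\pi$ has dimension at least $\dim V - \dim V \quot S$. Ingredients (ii) and (iii) give the opposite bound~$\le d$. Putting these together yields $\dim V - \dim V \quot S \le d$, which is the claimed inequality.

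Next I would handle the equivalence statement. In the ``if'' direction, assume the action is excellent; by (i), $\pi$ is equidimensional, so by (ii) every fiber has dimension exactly $d$, and the dimension formula gives $\dim V \quot S = \dim V - d = c+d$. In the ``only if'' direction, suppose $\dim V \quot S = c+d$; then the two-sided bound obtained in the previous paragraph forces every fiber of $\pi$ to have dimension exactly $d$, so $\pi$ is equidimensional, and (i) yields excellence.

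I do not expect a genuine obstacle here: the argument is a three-line combination of (i)--(iii) with standard dimension counting. The only point requiring a little care is the explicit use of surjectivity of $\pi$, so that the lower bound on fiber dimensions applies at every point of $V \quot S$ rather than just on a dense open subset; this surjectivity is automatic because $S$ is reductive and the morphism in question is the categorical quotient map.
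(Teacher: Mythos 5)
Your proposal is correct and follows essentially the same route as the paper, which likewise derives the proposition by combining the equivalence ``excellent $\Leftrightarrow$ equidimensional'' from \cite[\S\,8.1]{VP}, the computation $\dim(\text{null cone})=d$ via supports, the bound that every fiber has dimension at most that of the null cone, and the fiber-dimension theorem. Your explicit remark on surjectivity of the quotient map is a reasonable (and correct) point of care that the paper leaves implicit.
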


\section{The extended weight semigroup of a homogeneous space}\label{section_ews}

Let $H \subset G$ be an arbitrary subgroup. For every character
$\chi\in\mathfrak X(H)$ we regard the subspace
\begin{equation*}
V_{\chi} = \{f \in \mathbb C[G] \mid f(gh) = \chi(h)f(g) \:
\text{for all}\, g \in G, \,h \in H\}
\end{equation*}
of the algebra $\mathbb C[G]$. It is easy to see that the action of
$G$ on the space $\mathbb C[G]$ on the left preserves $V_\chi$ for
every $\chi \in \mathfrak X(H)$. All pairs of the form $(\lambda,
\chi)$, where $\lambda \in \Lambda_+(G)$, $\chi \in \mathfrak X(H)$,
such that $V_{\chi}$ contains the irreducible $G$-submodule with
highest weight~$\lambda$ form a semigroup. This semigroup is said to
be the \textit{extended weight semigroup} of the homogeneous
space~$G/H$. (For a more detailed description
see~\cite[\S\,1.2]{Avd2} or~\cite[\S\,1.2]{AG}.) We denote this
semigroup by $\widehat\Lambda_+(G/H)$. Since $V_0 = \mathbb C[G]^H =
\mathbb C[G/H]$, we obtain
\begin{equation} \label{semigroups}
\Lambda_+(G/H) \simeq \{(\lambda, \chi) \in \widehat\Lambda_+(G/H)
\mid \chi = 0\}.
\end{equation}

We define the subgroup $H_0 \subset H$ to be the common kernel of
all characters of~$H$. This subgroup is normal in~$H$ and
contains~$H'$, therefore the group $H/H_0$ is commutative (and is
thereby a quasi-torus). If $H$ is connected, then $H/H_0$ is also
connected and is thereby a torus. In what follows, we identify the
groups $\mathfrak X(H)$ and $\mathfrak X(H/H_0)$ via the natural
isomorphism $\mathfrak X(H/H_0) \to \mathfrak X(H)$. We have
$\bigoplus \limits_{\chi \in \mathfrak X(H)} V_\chi = \mathbb
C[G]^{H_0} = \mathbb C[G/H_0]$. We note that the action of the group
$T \times H/H_0$ determines a grading on ${}^U \mathbb C[G/H_0]$ by
the semigroup~$\widehat \Lambda_+(G/H)$ ($T$~acts on the left,
$H/H_0$ acts on the right).

We now turn to the situation where $H$ is a spherical subgroup
of~$G$. According to~\cite[Theorem~1]{VK}, the sphericity of~$H$ is
equivalent to the condition that the representation of $G$ on the
space $V_\chi$ is multiplicity free for every $\chi \in \mathfrak
X(H)$. This implies that the action of ${T \times H/H_0}$ on the
space ${}^U \mathbb C[G/H_0]$ is multiplicity free. Further, for a
spherical subgroup $H$ the semigroup $\widehat \Lambda_+(G/H)$ is
free (see~\cite[Theorem~2]{AG}, see also the case of connected $H$
in~\cite[Theorem~1]{Avd2}), hence the algebra ${}^U \mathbb
C[G/H_0]$ is free and is isomorphic to the semigroup algebra of the
semigroup~$\widehat \Lambda_+(G/H)$. Regard the affine space $Y_0 =
\Spec {}^U \mathbb C[G/H_0] \simeq \mathbb C^n$, where $n = \rk
\widehat \Lambda_+(G/H)$. We equip $Y_0$ with a structure of a
vector space in such a way that $(T \times H/H_0)$-semi-invariant
functions that freely generate the algebra ${}^U \mathbb C[G/H_0]$
correspond to the coordinate functions on~$Y_0$. The action of
$H/H_0$ on ${}^U \mathbb C[G/H_0]$ on the right naturally
corresponds to an action of this group on~$Y_0$.

\begin{lemma} \label{action_is_lin}
For a spherical subgroup $H \subset G$ the action $H/H_0 : Y_0$ is
linear. Moreover, if $(\lambda_1, \chi_1)$, $\ldots$, $(\lambda_n,
\chi_n)$ are all the indecomposable elements of the
\textup{(}free\textup{)} semigroup $\widehat \Lambda_+(G/H)$, then
there is an $H/H_0$-module isomorphism $Y_0 \simeq \mathbb
C_{-\chi_1} \oplus \ldots \oplus \mathbb C_{-\chi_n}$.
\end{lemma}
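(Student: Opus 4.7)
The plan is to unravel the identification $\mathbb{C}[Y_0]\simeq{}^U\mathbb{C}[G/H_0]$ and then track the right action of $H/H_0$ on an explicit set of free generators. Since $\widehat\Lambda_+(G/H)$ is free, I may pick $(T\times H/H_0)$-semi-invariant functions $f_1,\ldots,f_n\in{}^U\mathbb{C}[G/H_0]$ that freely generate this algebra, with $f_i$ of $T$-weight $\lambda_i$ on the left and with $f_i\in V_{\chi_i}$ on the right. By the very definition of the vector space structure on $Y_0$, the functions $f_i$ correspond, under ${}^U\mathbb{C}[G/H_0]\simeq\mathbb{C}[Y_0]$, to the linear coordinate functions $y_1,\ldots,y_n$ on $Y_0$.

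Next I would examine how $H/H_0$ acts on these coordinates. For any $h\in H$, the defining property of $V_{\chi_i}$ gives $(h\cdot f_i)(g)=f_i(gh)=\chi_i(h)f_i(g)$, so $h\cdot y_i=\chi_i(h)y_i$. Therefore the right action of $H/H_0$ on $\mathbb{C}[Y_0]$ rescales each coordinate and hence preserves the grading by total degree in the~$y_i$. In particular it preserves the subspace $Y_0^*\subset\mathbb{C}[Y_0]$ of linear functionals, and so the dual action of $H/H_0$ on $Y_0$ is linear. This establishes the first assertion of the lemma.

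To pin down the representation on $Y_0$ itself I would dualize. Let $e_1,\ldots,e_n\in Y_0$ be the basis dual to $y_1,\ldots,y_n$. The standard relation $(h\cdot y_i)(v)=y_i(h^{-1}v)$, combined with $h\cdot y_i=\chi_i(h)y_i$, yields $h\cdot e_i=\chi_i(h)^{-1}e_i$, i.e.\ $\mathbb{C}e_i\simeq\mathbb{C}_{-\chi_i}$ as an $H/H_0$-module. Summing over~$i$ gives the asserted isomorphism $Y_0\simeq\mathbb{C}_{-\chi_1}\oplus\ldots\oplus\mathbb{C}_{-\chi_n}$.

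There is no substantial obstacle here; the entire argument is a bookkeeping exercise on weights, and the only delicate point is the sign $-\chi_i$ in the conclusion, which reflects precisely the passage from the action on $\mathbb{C}[Y_0]$ to the contragredient action on~$Y_0$.
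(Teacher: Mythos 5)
Your argument is correct and is essentially the paper's own proof: the paper likewise takes the $(T\times H/H_0)$-semi-invariant free generators $f_1,\ldots,f_n$ corresponding to the indecomposable elements $(\lambda_i,\chi_i)$, interprets them as coordinate functions on $Y_0$, and reads off the module structure. You merely spell out the weight bookkeeping and the dualization step (the source of the sign $-\chi_i$) that the paper leaves implicit, and your sign computation is consistent with the paper's conventions for the action on the right.
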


\begin{proof}
Let $f_1, \ldots, f_n \in {}^U \mathbb C[G/H_0]$ be the nonzero $(T
\times H/H_0)$-semi-invariant functions corresponding to the
elements $(\lambda_1, \chi_1)$, $\ldots$, $(\lambda_n, \chi_n)$ of
$\widehat \Lambda_+(G/H)$, respectively. These functions freely
generate the algebra~${}^U \mathbb C[G/H_0]$. Interpreting $f_1,
\ldots, f_n$ as coordinate functions on~$Y_0$, we obtain the
required result.
\end{proof}

\begin{lemma} \label{action_is_sym}
For a reductive spherical subgroup~$H \subset G$ the linear action
${H/H_0 : Y_0}$ is symmetric.
\end{lemma}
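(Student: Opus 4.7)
The plan is to derive the symmetry of the $H/H_0$-action on $Y_0$ from a natural involution on the extended weight semigroup $\widehat \Lambda_+(G/H)$ induced by taking contragredient representations.

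First, I would show that the map $\sigma \colon (\lambda, \chi) \mapsto (\lambda^*, -\chi)$ is a well-defined involution of $\widehat \Lambda_+(G/H)$. Recall from the discussion in~\S\,\ref{section_zero_fiber} and in~\S\,\ref{section_ews} that, via isomorphism~(\ref{relation}) restricted to the $V(\lambda)$-isotypic component, the condition $(\lambda, \chi) \in \widehat \Lambda_+(G/H)$ is equivalent to the existence in $V(\lambda^*)$ of a nonzero vector $v$ satisfying $hv = \chi(h)v$ for every $h \in H$. Since $H$ is reductive, the $H$-module $V(\lambda^*)$ is completely reducible and splits as the direct sum of the subspace of such $\chi$-semi-invariants and an $H$-stable complement containing none. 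Dualizing this decomposition and using the $G$-isomorphism $V(\lambda^*)^* \simeq V(\lambda)$ produces a nonzero $(-\chi)$-semi-invariant in $V(\lambda)$ of the same multiplicity, so $(\lambda^*, -\chi) \in \widehat \Lambda_+(G/H)$.

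Second, since $\sigma$ is additive in each component and squares to the identity, it is an automorphism of the semigroup $\widehat \Lambda_+(G/H)$. This semigroup is free, as recalled in~\S\,\ref{section_ews}, so $\sigma$ necessarily permutes its finite set of indecomposable generators $(\lambda_1, \chi_1), \ldots, (\lambda_n, \chi_n)$. Combined with Lemma~\ref{action_is_lin}, which identifies the multiset of $H/H_0$-weights on $Y_0$ with $\{-\chi_1, \ldots, -\chi_n\}$, the fact that $\sigma$ negates the second coordinate forces the multiset $\{\chi_1, \ldots, \chi_n\}$ to be invariant under $\chi \mapsto -\chi$. This gives $\dim (Y_0)_\chi = \dim (Y_0)_{-\chi}$ for every character $\chi$ of $H/H_0$, which is precisely the symmetry condition.

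The main obstacle is verifying in the first step that $\sigma$ preserves $\widehat \Lambda_+(G/H)$, and this is where the reductivity hypothesis on $H$ enters in an essential way: without complete reducibility of $V(\lambda^*)$ as an $H$-module, an $H$-semi-invariant of weight $\chi$ need not give rise to an $H$-semi-invariant of weight $-\chi$ in the dual module, and the symmetry could genuinely fail.
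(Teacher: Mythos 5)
Your proposal is correct and follows essentially the same route as the paper's own proof: the involution $(\lambda,\chi)\mapsto(\lambda^*,-\chi)$ on $\widehat\Lambda_+(G/H)$, established by using reductivity of $H$ to split off the $\chi$-semi-invariant subspace of $V(\lambda^*)$ and dualizing, followed by the observation that a semigroup automorphism permutes the indecomposable generators and hence, via Lemma~\ref{action_is_lin}, makes the multiset of $H/H_0$-weights on $Y_0$ stable under negation. No gaps.
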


\begin{proof}
Using isomorphism~(\ref{relation}), it is not hard to show that for
$\lambda \in \Lambda_+(G)$ and $\chi \in \mathfrak X(H)$ the element
$(\lambda, \chi)$ is contained in $\widehat \Lambda_+(G/H)$ if and
only if the subspace $V(\lambda^*)_\chi^{(H)} \subset V(\lambda^*)$,
consisting of $H$-semi-invariant vectors of weight~$\chi$, is
one-dimensional. Suppose that $(\lambda, \chi) \in \widehat
\Lambda_+(G/H)$. As $H$ is reductive, we have $V(\lambda^*) =
V(\lambda^*)_\chi^{(H)} \oplus W$ for some $H$-invariant subspace $W
\subset V(\lambda^*)$. Let $\xi \in V(\lambda^*)^* \simeq
V(\lambda)$ be the linear function on~$V(\lambda^*)$ taking a basis
vector of $V(\lambda^*)_\chi^{(H)}$ to~$1$ and vanishing on~$W$.
Then $\xi$ is an $H$-semi-invariant element in~$V(\lambda)$ of
weight~$-\chi$. Therefore $(\lambda^*, -\chi) \in \widehat
\Lambda_+(G/H)$. Thus the map $(\lambda, \chi) \mapsto (\lambda^*,
-\chi)$ is an automorphism of the semigroup~$\widehat
\Lambda_+(G/H)$. In particular, under this automorphism
indecomposable elements are taken into indecomposable elements.
Hence in view of Lemma~\ref{action_is_lin} we obtain the required
result.
\end{proof}

\section{Reduction of the proof of Theorem~\ref{theorem_reformulation}\\
to the case of strictly irreducible spaces}\label{section_reduction}

Suppose that $G$ is simply connected and $H \subset G$ is a
connected reductive spherical subgroup. We recall that in
\S\,\ref{section_ews} we took $H_0$ to be the common kernel of all
characters of~$H$ and introduced the notation $Y_0$ for the affine
space $\Spec {}^U \mathbb C[G/H_0] \simeq \mathbb C^n$, where $n =
\rk \widehat \Lambda_+(G/H)$. We put $X_0 = \Spec \mathbb C[G/H_0]$.
In our situation, we have $X = G/H$, $X_0 = G/H_0$.

The commutative diagram
\begin{equation}
\xymatrix{
\mathbb C[G/H_0] & {}^U \mathbb C[G/H_0] \ar[l]\\
\mathbb C[G/H] \ar[u] & {}^U \mathbb C[G/H] \ar[l] \ar[ul] \ar[u] }
\end{equation}
of injective homomorphisms of algebras corresponds to the
commutative diagram
\begin{equation}
\xymatrix{
G/H_0 = X_0\hspace{-3.78em}& {\vphantom{/}}{\phantom{X_0}} \ar[r]^{\varphi_U} \ar[d]_{\psi_X} \ar[rd]^{\psi} & {\vphantom{/}}Y_0 \ar[d]^{\psi_Y}\\
G/H = X\hspace{-4.05em} & {\vphantom{/}}{\phantom{X}} \ar[r]_{\pi_U}
& {\vphantom{/}}Y }
\end{equation}
of dominant morphisms of the respective varieties. We recall that
the affine space $Y_0$ is equipped with the structure of a vector
space (see~\S\,\ref{section_ews}) and that the action $H/H_0 : Y_0$
is linear (Lemma~\ref{action_is_lin}) and symmetric
(Lemma~\ref{action_is_sym}). In this situation, $\psi_Y$ is nothing
else but the factorization morphism for the action $H/H_0 : Y_0$.

\begin{proposition} \label{crucial}
If the morphism $\pi_U$ is equidimensional, then the action ${H/H_0
: Y_0}$ is excellent.
\end{proposition}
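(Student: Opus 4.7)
The plan is to compare the dimension of the fiber of the composed morphism $\psi = \pi_U\circ\psi_X = \psi_Y\circ\varphi_U\colon X_0\to Y$ over the unique $T$-fixed point $y_0\in Y$, computed in the two ways afforded by the commutative diagram. Combined with Proposition~\ref{dim_quot} this will pin down $\dim Y$ exactly.

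Set $q=\dim(H/H_0)$, $n=\dim Y_0=\rk\widehat\Lambda_+(G/H)$, and recall that by Lemma~\ref{action_is_sym} the linear torus action $H/H_0:Y_0$ is symmetric, so $n=c+2d$ where $c,d$ are the invariants of \S\,\ref{section_sym_act}. First, along the factorization $\psi=\pi_U\circ\psi_X$: the morphism $\psi_X\colon X_0=G/H_0\to G/H=X$ is a principal $H/H_0$-bundle, so surjective with every fiber of dimension $q$; since $\pi_U$ is equidimensional and surjective, the fiber $F_Y:=\pi_U^{-1}(y_0)$ is non-empty of dimension $\dim X-\dim Y$. Hence
\[
\dim\psi^{-1}(y_0)=\dim\psi_X^{-1}(F_Y)=\dim F_Y+q=\dim X_0-\dim Y.
\]

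Next, along $\psi=\psi_Y\circ\varphi_U$: the fiber $\psi_Y^{-1}(y_0)$ is the null cone of the symmetric linear action $H/H_0:Y_0$, which has dimension exactly $d$ (this is the key dimension computation used inside Proposition~\ref{dim_quot}, via the method of supports of \cite[\S5.4]{VP}). Pick an irreducible component $V$ of $\psi_Y^{-1}(y_0)$ of dimension $d$ that meets the image of the dominant morphism $\varphi_U\colon X_0\to Y_0$; the existence of such a $V$ will be ensured using Proposition~\ref{non_empty} applied to $G/H_0$ (after handling, by trivial product reductions, the case where $H$ has trivial projection to some simple factor of $G$, in which that factor splits off cleanly). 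Since $\varphi_U^{-1}(V)\subset\psi^{-1}(y_0)$ maps dominantly onto $V$, there is an irreducible component $W$ of $\varphi_U^{-1}(V)$ that dominates $V$ and whose generic fiber over $V$ is built out of components of the generic $\varphi_U$-fiber of dimension $\ge\dim X_0-\dim Y_0$. The theorem on fiber dimensions then yields
\[
\dim\psi^{-1}(y_0)\ge \dim W\ge \dim V+(\dim X_0-\dim Y_0)=d+\dim X_0-n.
\]

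Combining the two computations gives $\dim X_0-\dim Y\ge d+\dim X_0-n$, i.e.\ $\dim Y\le n-d=c+d$. But Proposition~\ref{dim_quot} always gives $\dim Y\ge c+d$, with equality exactly when the action $H/H_0:Y_0$ is excellent. Hence equality holds and $H/H_0:Y_0$ is excellent, as required.

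The main obstacle is the lower bound in the second route: one must guarantee that some top-dimensional component of the null cone $\psi_Y^{-1}(y_0)$ lies in the image of $\varphi_U$, so that the theorem on fiber dimensions produces a component of $\varphi_U^{-1}(V)$ whose generic fiber attains the generic fiber dimension of $\varphi_U$. The natural way to secure this is to verify that $\varphi_U\colon G/H_0\to Y_0$ is surjective (reducing, if necessary, to the case where $H$ projects non-trivially to every simple factor of $G$, and then invoking Proposition~\ref{non_empty} together with the contracting $T$-action on $Y_0$). Everything else is routine dimension bookkeeping.
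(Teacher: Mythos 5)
Your overall architecture is the same as the paper's: compare the codimension of the null fiber of $\psi=\pi_U\circ\psi_X=\psi_Y\circ\varphi_U$ with that of a generic fiber, and close the argument with Proposition~\ref{dim_quot}. Your first route (through $\psi_X$ and the equidimensionality of $\pi_U$) is fine. The gap is exactly where you flag it, and your proposed repairs do not work.

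First, $\varphi_U\colon G/H_0\to Y_0$ is \emph{not} surjective in general. The basic counterexample is $G=\SL_2$, $H=\mathbb C^\times$ (a maximal torus): then $H_0=\{e\}$, $Y_0=\mathbb C^2$ with coordinates the bottom-row entries $c,d$, and the image of $\varphi_U$ is $\mathbb C^2\setminus\{0\}$. This case genuinely occurs as a factor whenever some strictly irreducible constituent is $\SL_2/\mathbb C^\times$ (the only case with $H_i'=\{e\}$), and your reduction ``$H$ projects non-trivially to every simple factor'' does not exclude it: the relevant group for $\varphi_U$ is $H_0$ (essentially $H'$), not $H$, and here $H$ does project non-trivially while $H_0$ is trivial. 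Second, even where Proposition~\ref{non_empty} applies (to $G/H_0$, which requires the projections of $H'$ to be non-unipotent), it only shows that the \emph{origin} of $Y_0$ lies in the image of $\varphi_U$. Feeding the single point $\{0\}$ into a fiber-dimension argument gives $\dim\psi^{-1}(y_0)\ge\dim X_0-n$, hence only $\dim Y\le n=c+2d$, which is too weak. To get $\dim Y\le c+d$ you need the image of $\varphi_U$ to be dense in some $d$-dimensional piece of the null cone, and neither surjectivity nor Proposition~\ref{non_empty} delivers that.

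The paper's proof avoids density altogether. It splits the nonzero weights of the free generators of ${}^U\mathbb C[G/H_0]$ by a generic hyperplane as $\Phi=\Phi^+\sqcup\Phi^-$, takes the $c+d$ generators with weights in $\Phi^+\cup\{0\}$, and observes that their common zero locus $\mathcal N\subset X_0$ sits inside the null fiber of $\psi$. It then proves $\mathcal N\ne\varnothing$ factor by factor: for factors with $H_i'$ non-trivial, Proposition~\ref{non_empty} makes even the full preimage of the origin nonempty; for the $\SL_2/\mathbb C^\times$ factors, only \emph{one} of the two coordinates is required to vanish, and its zero set in $\SL_2$ is nonempty by inspection. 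Krull's height theorem (a nonempty zero locus of $c+d$ functions has codimension at most $c+d$) then gives the needed bound with no statement about the image of $\varphi_U$. This choice of ``only half of the equations cutting out the origin, plus the height theorem in place of a dense-image fiber-dimension estimate'' is the missing idea; without it your second route does not go through.
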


\begin{proof}
Suppose that $\pi_U$ is equidimensional. Since the morphism $\psi_X$
is also equidimensional, it follows that the morphism $\psi \colon
X_0 \to Y$ is equidimensional. Assume that the action $H/H_0 : Y_0$
is not excellent. Let $c$ and $d$ be the corresponding
characteristics of this action (see \S\,\ref{section_sym_act}). In
view of Proposition~\ref{dim_quot} we have $\dim Y > c + d$, whence
by the theorem on dimensions of fibers of a dominant morphism the
codimension of a generic fiber of $\psi$ is greater than $c + d$.

It follows from Theorem~\ref{classification} that there are simply
connected strictly irreducible affine spherical homogeneous spaces
$G_1/H_1$, $\ldots$, $G_n/H_n$ such that $G = G_1 \times \ldots
\times G_n$ and $H'_1 \times \ldots \times H'_n \subset H \subset
H_1 \times \ldots \times H_n$. Then $G/H_0 \simeq G_1/H'_1 \times
\ldots \times G_n/H'_n$. For every $i = 1, \ldots, n$ we put $U_i =
U \cap G_i$, $T_i = T \cap G_i$. In each algebra ${}^{U_i} \mathbb
C[G_i/H'_i] \subset {}^U \mathbb C[G/H_0]$, $i = 1, \ldots, n$, we
fix a subset $F_i$, consisting of $(T_i \times
H_i/H'_i)$-semi-invariant functions that freely generate this
algebra. Then $F = F_1 \cup \ldots \cup F_n$ is a set of $(T \times
H/H_0)$-semi-invariant functions that freely generate the
algebra~${}^U \mathbb C[G/H_0]$.

Let $\Phi \subset \mathfrak X(H/H_0)$ be the set of nonzero
$H/H_0$-weights of all functions in~$F$. In the space $\mathfrak
X(H/H_0) \otimesZ \mathbb Q$, fix a hyperplane $h$ that contains no
elements of~$\Phi$. Choose one of the half-spaces bounded by~$h$ and
denote by $\Phi^+$ the set of weights in $\Phi$ that are contained
in this half-space. We put $\Phi^- = -\Phi^+$ so that $\Phi = \Phi^+
\cup \Phi^-$. Let $F^+$ (resp.~$F^0$,~$F^-$) be the set of functions
in~$F$ whose $H/H_0$-weights belong to $\Phi^+$
(resp.~$\{0\}$,~$\Phi^-$). For every $i = 1, \ldots, n$ put also
$F^+_i = F^+ \cap F_i$, $F^0_i = F^0 \cap F_i$, $F^-_i = F^- \cap
F_i$. Clearly, $c = |F^+| = |F^-|$ and $d = |F^0|$. It is not hard
to deduce (see~\cite[\S\,5.4]{VP}) that the subset of~$Y_0$ defined
by the vanishing of all the $c + d$ functions in $F^+ \cup F^0$ is
contained in the null fiber of~$\psi_Y$. Then the subset $\mathcal
N$ of $X_0$ defined by the vanishing of the same functions is
contained in the fiber $\psi^{-1}(\psi_Y(0))$ of the morphism~$\psi$
(below we refer to this fiber as the null fiber as well). Let us
show that $\mathcal N \ne \varnothing$. As can be easily seen, it is
enough to show that the subset of $G_i/H'_i$ defined by the
vanishing of all the functions in $F^+_i \cup F^0_i$ is nonempty for
every $i = 1, \ldots, n$. To do that, we consider two possibilities.

(1) The group $H'_i$ is trivial. Then $H_i$ is a torus. Inspecting
the list of all simply connected strictly irreducible affine
spherical homogeneous spaces we find that this is only possible for
$G_i = \SL_2$, ${H_i \simeq \mathbb C^\times}$. In this case, $F_i$
contains two functions having nonzero opposite weights with respect
to the torus~$H_i/H'_i$. As $\SL_2/\mathbb C^\times$ is a strictly
irreducible spherical homogeneous space of type~I, by
Theorem~\ref{classification}(a) the images of these weights in
$\mathfrak X(H/H_0)$ are also nonzero. Hence $|F^+_i| = |F^-_i| = 1$
and $F_i = F^+_i \cup F^-_i$. A direct check shows that the subset
of $\SL_2$ defined by the vanishing of the unique function in
$F^+_i$ is nonempty.

(2) The group $H'_i$ is nontrivial. Inspecting the list of all
simply connected strictly irreducible affine spherical homogeneous
spaces we find that in this case the group $H'_i$ satisfies the
hypothesis of Proposition~\ref{non_empty}, hence even the subset of
$G_i/H'_i$ defined by the vanishing of all functions in $F_i$ is
nonempty.

Thus, $\mathcal N \ne \varnothing$. Since $\mathcal N$ is contained
in the null fiber of $\psi$ and is defined in~$X_0$ by the vanishing
of $c+d$ functions, the codimension in $X_0$ of the null fiber of
$\psi$ is at most $c + d$ and therefore is strictly less than the
codimension of a generic fiber. Therefore the morphism $\psi$ is not
equidimensional, a contradiction.
\end{proof}

\begin{proposition} \label{ed_product}
Suppose that $G/H$ is a simply connected affine spherical
homogeneous space such that the morphism $\pi_U$ is equidimensional.
Then $G/H$ is a direct product of several \textup{(}simply
connected\textup{)} strictly irreducible affine spherical
homogeneous spaces.
\end{proposition}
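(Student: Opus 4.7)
The plan is to combine Theorem~\ref{classification} with Proposition~\ref{crucial} and conclude, by a dimension count, that the ``central twist'' subgroup $Z$ in the decomposition of $H$ must be a product of subgroups of the $Z(H_i)^0$'s.

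First, using Theorem~\ref{classification}, write $G = G_1\times\cdots\times G_n$ with each $G_i/H_i$ simply connected strictly irreducible affine spherical, $\widetilde H = H_1\times\cdots\times H_n$, $H_0 = H'_1\times\cdots\times H'_n$, and $H = Z\cdot H_0$ for some connected subgroup $Z \subset Z(\widetilde H)^0 = Z(H_1)^0\times\cdots\times Z(H_n)^0$. Correspondingly $Y_0 = Y_0^{(1)}\times\cdots\times Y_0^{(n)}$ with $Y_0^{(i)} = \Spec {}^U\mathbb C[G_i/H'_i]$, and the $Z(\widetilde H)^0$-action on $Y_0$ decomposes as the product of $Z(H_i)^0$-actions on the factors. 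Denote the projection $Z(\widetilde H)^0 \to Z(H_i)^0$ by $\pi_i$ and set $J = \{i : \pi_i(Z)\ne\{e\}\}$. Since $\pi_i(Z)$ is a connected subgroup of $Z(H_i)^0 \cong \mathbb C^\times$, we get $\pi_i(Z) = Z(H_i)^0$ for $i\in J$, and $Z \subset \prod_{i\in J}Z(H_i)^0$, giving the easy bound $\dim Z \le |J|$.

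Next I would invoke Proposition~\ref{crucial} to get that the linear action $Z : Y_0$ is excellent, meaning: each nonzero $Z$-weight space in $Y_0$ is one-dimensional, and the distinct nonzero $Z$-weights (up to sign) are linearly independent in $\mathfrak X(Z)\otimes\mathbb Q$. For $i\in J$, the $Z$-weights on $Y_0^{(i)}$ all lie in the rank-one subgroup of $\mathfrak X(Z)$ that is the image of $\mathfrak X(Z(H_i)^0)$ under the (now injective) restriction map, so linear independence forces at most one distinct $Z$-weight $\beta_i$ up to sign; the classification of extended weight semigroups of strictly irreducible spaces in~\cite{Avd2} ensures $\beta_i\ne 0$ since $\dim Z(H_i)^0 = 1$. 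For $i\notin J$ the group $Z$ acts trivially on $Y_0^{(i)}$. The one-dimensional weight space condition forces the $\beta_i$, $i\in J$, to be pairwise distinct, and linear independence gives $|J|$ independent vectors in $\mathfrak X(Z)\otimes\mathbb Q$, so $|J|\le\dim Z$.

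Combining the two inequalities yields $\dim Z = |J|$, so $Z$ equals the full torus $\prod_{i\in J}Z(H_i)^0$ and hence splits as $Z = Z_1\times\cdots\times Z_n$ with $Z_i = Z(H_i)^0$ for $i\in J$ and $Z_i = \{e\}$ otherwise. Therefore $H = \prod_i K_i$ with $K_i := Z_i\cdot H'_i \in \{H_i, H'_i\}$, and $G/H = \prod_i G_i/K_i$ is a direct product. By Theorem~\ref{classification}(a), sphericity of $G/H$ forces every type-I index into $J$ (otherwise the corresponding $\tau(\chi_i)$ would vanish, violating linear independence), and for type-II indices both $G_i/H_i$ and $G_i/H'_i$ appear in the classification as strictly irreducible (the latter as a type-III entry); hence every factor $G_i/K_i$ is simply connected strictly irreducible affine spherical. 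The main obstacle is verifying the nonvanishing of the weights $\beta_i$ and the strict irreducibility of $G_i/H'_i$ for type-II factors; both ultimately reduce to inspection of the known classification tables.
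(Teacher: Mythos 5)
Your proposal is correct and follows essentially the same route as the paper: decompose via Theorem~\ref{classification}, apply Proposition~\ref{crucial} to get excellence of the central torus action on $Y_0$, and use the resulting linear independence of the nonzero weights $\beta_i$ (the paper's $\tau(\chi_i)$) together with the containment $Z\subset\prod_{i\in J}Z(H_i)^0$ to force $\dim Z=|J|$ and hence $Z=\prod_{i\in J}Z(H_i)^0$. The paper is merely terser, leaving the two-sided dimension count and the strict irreducibility of the type-II quotients $G_i/H'_i$ implicit, both of which you spell out.
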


\begin{proof}
By Theorem~\ref{classification} there are simply connected strictly
irreducible affine spherical homogeneous spaces $G_1/H_1$, $\ldots$,
$G_n/H_n$ such that:

(1) $G = G_1 \times \ldots \times G_n$;

(2) $H'_1 \times \ldots \times H'_n \subset H \subset H_1 \times
\ldots \times H_n$.

Without loss of generality we may assume that the space $G_i/H_i$ is
of type~I or~II for $i \le q$ and is of type~III for $i > q$. For
every $i = 1, \ldots, n$, put $U_i = G_i \cap U$.

Let $\tau \colon \mathfrak X(H_1 \times \ldots \times H_n) \to
\mathfrak X(H)$ be the character restriction map. For $i = 1,
\ldots, q$ we denote by $\chi_i$ a basis character in $\mathfrak
X(H_i) = \mathfrak X(H_i/H'_i)$. Renumbering if necessary, we may
assume that $\tau(\chi_i) \ne 0$ for $i \le p$ and $\tau(\chi_i) =
0$ for $p < i \le q$. Since for every $i = 1, \ldots, p$ the
(linear) action of the torus $H_i/H'_i$ on the affine space $\Spec
{}^{U_i} \mathbb C[G_i/H'_i]$ is nontrivial,
Proposition~\ref{crucial} implies that the weights $\tau(\chi_1)$,
$\ldots$, $\tau(\chi_p)$ are pairwise different and linearly
independent in~$\mathfrak X(H)$. It follows that $H = H_1 \times
\ldots \times H_p \times H'_{p+1} \times \ldots \times H'_n$ and
$G/H \simeq G_1/H_1 \times \ldots \times G_p/H_p \times
G_{p+1}/H'_{p+1} \times \ldots \times G_n/H'_n$.
\end{proof}

\section{The case of strictly irreducible spaces} \label{section_str_irr}

In view of Proposition~\ref{ed_product}, to complete the proof of
Theorem~\ref{theorem_reformulation} it remains to prove the
following proposition.

\begin{proposition} \label{prop_NE_NED}
Suppose that $G/H$ is a simply connected strictly irreducible affine
spherical homogeneous space that is not excellent. Then the morphism
$\pi_U$ is not equidimensional.
\end{proposition}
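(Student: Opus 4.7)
The plan is to argue by contraposition: assuming $\pi_U$ is equidimensional, I would deduce that $G/H$ is excellent. The first step is to apply Proposition~\ref{crucial}, which yields that the induced linear action $H/H_0 : Y_0$ is excellent in the sense of~\S\,\ref{section_sym_act}. Using Lemmas~\ref{action_is_lin} and~\ref{action_is_sym}, this means that the indecomposables $(\la_i,\chi_i)$ of~$\widehat\Lambda_+(G/H)$ with $\chi_i\ne 0$ split into $d$ pairs of opposite $\chi$-values, and the corresponding $d$ characters are linearly independent in~$\mf X(H/H_0)$.

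Strict irreducibility of $G/H$ restricts the situation to the three types introduced in~\S\,\ref{section_classification}, all satisfying $\dim Z(H)\le 1$. In type~III one has $H/H_0=\{e\}$ and $\widehat\Lambda_+(G/H)=\Lambda_+(G/H)$, so the conclusion reduces to the claim that every simply connected strictly irreducible affine spherical homogeneous space of type~III is already excellent, which is a direct inspection of the classification. In types~I and~II, $H/H_0\simeq\mb C^\times$ has rank one, so the excellence of $H/H_0:Y_0$ forces $d\le 1$; equivalently, in the list of indecomposables $(\la_i,\chi_i)$ there is at most one nonzero $\chi_i$-value together with its opposite. Combined with the identification~(\ref{semigroups}), this describes $\Lambda_+(G/H)$ explicitly as being generated by the indecomposables with $\chi_i=0$ together with at most one sum of a pair with opposite nonzero~$\chi$.

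The main step, and the principal obstacle, is then a case-by-case verification based on the classification in Tables~1 and~2 of~\cite{Avd1} together with the extended weight semigroups $\widehat\Lambda_+(G/H)$ computed in~\cite{Kr} for simple~$G$ and in~\cite{Avd2} for nonsimple semisimple~$G$. For each non-excellent entry in types~I or~II, the tables should show that at least two (possibly coinciding) pairs of opposite nonzero $\chi$-values occur among the indecomposables of $\widehat\Lambda_+(G/H)$, which in the rank-one lattice $\mf X(H/H_0)$ are automatically linearly dependent and hence contradict the excellence of $H/H_0:Y_0$ supplied by Proposition~\ref{crucial}; for type~III the tables should confirm that every such space is in fact excellent. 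The substantive difficulty lies in this combinatorial enumeration over the several infinite families of classical groups and the sporadic exceptional cases appearing in the list.
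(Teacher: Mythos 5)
Your treatment of types I and II is essentially the paper's argument for its Case~1 ($\dim Z(H)=1$): there, Proposition~\ref{crucial} together with the rank-one constraint on $\mathfrak X(H/H_0)$ reduces everything to checking, from the tables of indecomposable elements of $\widehat\Lambda_+(G/H)$, that at least two pairs of opposite nonzero characters occur, and this is exactly what the paper does for its four families $1^\circ$--$4^\circ$ in that case. Up to this point the proposal is sound.

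The genuine gap is your claim that in type~III the conclusion ``reduces to the claim that every simply connected strictly irreducible affine spherical homogeneous space of type~III is already excellent.'' That claim is false. When $H$ is semisimple, $H/H_0$ is trivial, the action $H/H_0:Y_0$ is vacuously excellent ($d=0$), and Proposition~\ref{crucial} carries no information --- but the semigroup $\Lambda_+(G/H)\simeq\widehat\Lambda_+(G/H)$, although free, need not be generated by weights with pairwise disjoint supports. The paper's Case~2 lists five infinite series of non-excellent type~III spaces, e.g.\ $G=\Spin_n\times\Spin_{n+1}$, $H=\Spin_n$ diagonal, whose generators include weights such as $\pi_1+\varphi_1$, $\varphi_1$, $\pi_1+\varphi_2$ with overlapping supports. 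For these spaces no reduction to a torus action is available, and the paper must argue directly: it exhibits explicit generating $U$-invariants $f_1,f_2,f_3$ and shows either an algebraic dependence of their zero loci (the locus $f_1=f_2=0$ already forces $f_3=0$) or, via Lemma~\ref{auxiliary} and explicit matrices, that the common zero set of all $r$ free generators has codimension at most $r-1$, so the null fiber is too big and $\pi_U$ fails to be equidimensional. This geometric work is the bulk of the proof and is entirely missing from, indeed contradicted by, your proposal.
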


\begin{proof}
Since we have at our disposal the classification (or more precisely,
a complete list) of simply connected strictly irreducible affine
spherical homogeneous spaces (see \S\,\ref{section_classification}),
it suffices to consider all nonexcellent spaces among them case by
case and prove the assertion by a direct check in each case. We
recall that, up to an isomorphism, the list of all simply connected
strictly irreducible affine spherical homogeneous spaces is
collected in Tables~1 and~2 of the paper~\cite{Avd1}. In these
tables it is also indicated whether each of the spaces is excellent
or not. Hence we obtain a list of spaces we need to consider in
order to prove the proposition. Before we proceed to this list, let
us introduce some additional conventions and notation.

If $G$ is a product of several factors, then we denote by $\pi_i$,
$\varphi_i$, and $\psi_i$ the $i$th fundamental weight of the first,
second, and third factor, respectively, and we use the same
numeration of the fundamental weights of simple groups as in the
book~\cite{VO}.

We denote by $E_m$ the identity matrix of order~$m$ and by $F_m$ the
matrix of order $m$ with ones on the antidiagonal and zeros
elsewhere.

For every matrix denoted by a capital letter, the corresponding
small letter with a double index $ij$ stands for the element in the
$i$th row and the $j$th column of this matrix. For example, $P$ is a
matrix and $p_{ij}$ is the element in its $i$th row and $j$th column
of~$P$.

A basis $e_1, \ldots, e_n$ of the space of the tautological linear
representation of the group $\SO_n$ is assumed to be chosen in such
a way that the matrix of the invariant nondegenerate symmetric
bilinear form is~$F_n$. A basis $e_1, \ldots, e_{2m}$ of the space
of the tautological linear representation of the group $\Sp_{2m}$ is
assumed to be chosen in such a way that the invariant nondegenerate
skew-symmetric bilinear form has the matrix
$$
\begin{pmatrix}
0 & F_m\\
-F_m & 0
\end{pmatrix}.
$$
With these choices of bases we may (and shall) assume that for every
simple factor $\overline G$ of $G$ the groups $B \cap \overline G$,
$U \cap \overline G$, and $T \cap \overline G$ consist of the
upper-triangular, upper unitriangular, and diagonal matrices
contained in $\overline G$.

We now proceed to a consideration of all the simply connected
strictly irreducible affine spherical homogeneous spaces $G / H$
that are not excellent. We divide the consideration into two cases
depending on the value~$\dim Z(H)$.

\textit{Case}~1. $\dim Z(H) = 1$. For every space $G/H$ under
consideration we indicate the indecomposable elements of the
semigroup~$\widehat \Lambda_+(G/H)$. By Lemma~\ref{action_is_lin}
these elements completely determine the $H/H_0$-module structure
in~$Y_0$. For every space $G/H$ it is easy to check that the action
$H/H_0 : Y_0$ is not excellent, whence by Proposition~\ref{crucial}
the morphism $\pi_U$ is not equidimensional.

$1^\circ$. $G = \SL_{2n+1}$, $H = \mathbb C^\times \times \Sp_{2n}$,
$n \ge 2$. The factor $\Sp_{2n}$ of $H$ is embedded in $G$ as the
upper left $2n \times 2n$ block, and the torus $\mathbb C^\times$ is
embedded in $G$ via the map $s \mapsto \diag(s, \ldots, s,
s^{-2n})$.

The information contained in rows~6 and~7 of Table~1 in~\cite{Avd1}
enables us to conclude that the semigroup $\widehat \Lambda_+(G/H)$
is freely generated by the elements $(\pi_1, n\chi)$, $(\pi_2,
-\chi)$, ${(\pi_3, (n-1)\chi)}$, $(\pi_4, -2 \chi)$, $\ldots$,
$(\pi_{2n-1}, \chi)$, $(\pi_{2n}, -n\chi)$ for some nonzero
character~${\chi \in \mathfrak X(H)}$. As $n \ge 2$, it follows that
the action $H/H_0 : Y_0$ is not excellent and the morphism $\pi_U$
is not equidimensional.

$2^\circ$. $G = \Spin_{10}$, $H = \mathbb C^\times \times \Spin_7$.
This homogeneous space is uniquely determined by the homogeneous
space $\widetilde G/\widetilde H$, locally isomorphic to~$G/H$,
where $\widetilde G = \SO_{10}$, $\widetilde H = \mathbb C^\times
\times \Spin_7$. The factor $\Spin_7$ of $\widetilde H$ is regarded
as a subgroup of~$\SO_8$ (the embedding $\Spin_7 \hookrightarrow
\SO_8$ is given by the spinor representation), and the group $\SO_8$
is embedded in $\widetilde G$ as the central $8 \times 8$ block. The
torus $\mathbb C^\times$ is embedded in $\widetilde G$ via the map
$s \mapsto \diag(s,1,1, \ldots, 1, s^{-1})$.

The information contained in row~16 of Table~1 in~\cite{Avd1} and in
row~13 of Table~1 in~\cite{Pan1} enables us to conclude that the
semigroup $\widehat \Lambda_+(G/H)$ is freely generated by the
elements $(\pi_1, 2\chi)$, $(\pi_1, -2\chi)$, $(\pi_2, 0)$, $(\pi_4,
\chi)$, $(\pi_5, -\chi)$ for some nonzero character~${\chi \in
\mathfrak X(H)}$. It follows that the action $H/H_0 : Y_0$ is not
excellent and the morphism $\pi_U$ is not equidimensional.

$3^\circ$. $G = \SL_n \times \SL_{n+1}$, $H = \SL_n \times \mathbb
C^\times$, $n \ge 2$. The factor $\SL_n$ of $H$ is diagonally
embedded in $G$, as the upper left $n \times n$ block in the factor
$\SL_{n+1}$. The torus $\mathbb C^\times$ is embedded in the factor
$\SL_{n+1}$ of $G$ via the map $s \mapsto \diag(s, \ldots, s,
s^{-n})$.

It is indicated in row~1 of Table~1 in~\cite{Avd2} that the
semigroup $\widehat \Lambda_+(G/H)$ is freely generated by the
elements $(\varphi_1, n\chi)$, $(\pi_{n-1} + \varphi_2, (n-1)\chi)$,
$\dots$, $(\pi_1 + \varphi_n, \chi)$, $(\pi_{n-1} + \varphi_1,
-\chi)$, $\dots$, $(\pi_1 + \varphi_{n-1}, -(n-1)\chi)$,
$(\varphi_n, -n\chi)$ for some nonzero character~$\chi \in \mathfrak
X(H)$. It follows that the action $H/H_0 : Y_0$ is not excellent and
the morphism $\pi_U$ is not equidimensional.

$4^\circ$. $G = \SL_n \times \Sp_{2m}$, $H = \mathbb C^\times \cdot
\SL_{n-2} \times \SL_2 \times \Sp_{2m-2}$, $n \ge 3$, $m \ge 1$. The
embedding of $H$ in $G$ is as follows. The factor $\SL_{n-2}$ is
embedded in the factor $\SL_n$ of $G$ as the upper left $(n-2)\times
(n-2)$ block. The factor $\SL_2$ is diagonally embedded in~$G$ as
the lower right $2\times 2$ block in the factor $\SL_n$ and as the
$2\times 2$ block corresponding to the first and the last rows and
columns in the factor~$\Sp_{2m}$. The factor $\Sp_{2m-2}$ of $H$ is
embedded in the factor $\Sp_{2m}$ of $G$ as the central
$(2m-2)\times (2m-2)$ block. At last, the torus $\mathbb C^\times$
embedded in the factor $\SL_n$ of $G$ via the map $s \mapsto
\diag(s^{-2}, \ldots, s^{-2}, s^{n-2}, s^{n-2})$ for odd $n$ and via
the map $s \mapsto \diag(s^{-1}, \ldots, s^{-1}, s^{\frac{n-2}2},
s^{\frac{n-2}2})$ for even~$n$.

It is indicated in row~3 of Table~1 in~\cite{Avd2} that the
semigroup $\widehat \Lambda_+(G/H)$ is freely generated by the
elements $(\pi_{n-2}, 2\chi)$, $(\varphi_2, 0)$ (this element is
contained in the set of indecomposable elements for~${m \ge 2}$),
${(\pi_{n-1} + \varphi_1, \chi)}$, $(\pi_1 + \pi_{n-1}, 0)$ (this
element is contained in the set of indecomposable elements for~$n
\ge 4$), ${(\pi_1 + \varphi_1, -\chi)}$, $(\pi_2, -2\chi)$ for some
nonzero character~$\chi \in \mathfrak X(H)$. It follows that the
action $H/H_0 : Y_0$ is not excellent and the morphism $\pi_U$ is
not equidimensional.

\textit{Case}~2. $\dim Z(H) = 0$. For each of the spaces $G/H$ below
we fix and denote by $F$ a set of $T$-semi-invariant functions that
freely generate the algebra ${}^U \mathbb C[G/H]$. The weights of
these functions are the indecomposable elements of the (free)
semigroup $\Lambda_+(G/H) \simeq \widehat \Lambda_+(G/H)$. We put
$|F| = r$. Note that $r$ is the codimension of a generic fiber of
the morphism~$\pi_U$. In all the cases the null fiber of $\pi_U$ is
nonempty by Proposition~\ref{non_empty}. To consider the spaces of
series $2^\circ$ and $4^\circ$ we shall need the auxiliary lemma
below, which follows from the theorem on dimensions of fibers of a
dominant morphism.

\begin{lemma} \label{auxiliary}
Let $\gamma \colon M \to N$ be a morphism of affine algebraic
varieties. Suppose that a closed subvariety $N_0 \subset N$ is such
that the set ${\gamma(M) \cap N_0}$ is dense in~$N_0$. Then
$\codim_M \gamma^{-1}(N_0) \le \codim_N N_0$.
\end{lemma}

Below we treat all the required spaces $G/H$.

$1^\circ$. $G = \Spin_n \times \Spin_{n+1}$, $H = \Spin_n$, $n \ge
3$. The homogeneous space $G/H$ is uniquely determined by the
homogeneous space $\widetilde G / \widetilde H$, locally isomorphic
to $G/H$, where $\widetilde G = \SO_n \times \SO_{n+1}$, $\widetilde
H = \SO_n$, and the subgroup $\widetilde H$ is diagonally embedded
in~$\widetilde G$. Let $\theta_m \colon \SO_m \hookrightarrow
\SO_{m+1}$ be the embedding induced by the embedding $\mathbb C^m
\hookrightarrow \mathbb C^{m+1}$ sending the basis $e_1, \ldots,
e_m$ to the tuple $e_1, \ldots, e_{\frac{m}2}, e_{\frac{m}2+2},
\ldots e_{m+1}$ for even $m$ and to the tuple $e_1, \ldots,
e_{\frac{m-1}2}, {\frac{1}{\sqrt{2}}(e_{\frac{m+1}2} +
e_{\frac{m+3}2})}, e_{\frac{m+5}2}, \ldots, e_{m+1}$ for odd~$m$.
The image of $\theta_m$ is the stabilizer of the vector
$e_{\frac{m}2 + 1}$ for even~$m$ and the vector $e_{\frac{m+1}2} -
e_{\frac{m+3}2}$ for odd~$m$. We fix the embedding of $\widetilde H$
in $\widetilde G$ such that the image in $\widetilde G$ of the
matrix ${P \in \widetilde H}$ is $(P,\theta_n(P))$. The covering
$G/H \to \widetilde G / \widetilde H$ determines the natural
embedding $\mathbb C[\widetilde G / \widetilde H] \hookrightarrow
\mathbb C[G/H]$. In view of this embedding every regular function on
$\widetilde G / \widetilde H$ will also be considered as a regular
function on~$G/H$.

For $n = 3$ there is an isomorphism between the homogeneous space
$G/H$ and the space $(\SL_2 \times \SL_2 \times \SL_2) / \SL_2$,
where the subgroup $\SL_2$ is diagonally embedded in $\SL_2 \times
\SL_2 \times \SL_2$. This space is isomorphic to the space of
series~$4^\circ$ (see below) with $n = m = l = 1$. For $n = 4$ the
homogeneous space $G/H$ is isomorphic to the space of
series~$5^\circ$ (see below) with $n = m = 1$. Therefore, below we
assume that~$n \ge 5$.

Suppose that $g = (P,Q) \in \widetilde G$. Put $R =
Q\theta_n(P)^{-1}$. For $n = 2k$ the set $F$ contains functions that
are proportional to the following functions of~$g$: $f_1 =
r_{n+1,1}$, $f_2 = r_{n+1,k+1}$, and $f_3 = f_1r_{n,k+1} -
f_2r_{n,1}$; for $n = 2k + 1$ the set $F$ contains functions that
are proportional to the following functions of~$g$: $f_1 =
r_{n+1,1}$, $f_2 = r_{n+1,k+1} - r_{n+1,k+2}$, and $f_3 =
f_1(r_{n,k+1} - r_{n,k+2}) - f_2r_{n,1}$ (see~\cite[\S\,3.1,
Case~2]{Avd2}). In what follows, without loss of generality we
assume that $f_1, f_2, f_3 \in F$. The $T$-weights of the functions
$f_1, f_2$ are $\pi_1 + \varphi_1$, $\varphi_1$, respectively, and
the $T$-weight of $f_3$ is $\pi_1 + \varphi_2$ for $n \ge 6$ and
$\pi_1 + \varphi_2 + \varphi_3$ for $n = 5$. Since the condition
$f_1 = f_2 = 0$ implies $f_3 = 0$, the subset of $G/H$ defined by
the vanishing of all functions in $F$ coincides with the subset of
$G/H$ defined by the vanishing of all functions in $F \backslash
\{f_3\}$. Thus the codimension of the null fiber of the morphism
$\pi_U$ is at most $r - 1$, hence $\pi_U$ is not equidimensional.

$2^\circ$. $G = \SL_n \times \Sp_{2m}$, $H = \SL_{n-2} \times \SL_2
\times \Sp_{2m-2}$, $n \ge 5$, $m \ge 1$. The factor $\SL_{n-2}$ of
$H$ is embedded in the factor $\SL_n$ of $G$ as the upper left
$(n-2)\times (n-2)$ block. The factor $\SL_2$ of $H$ is diagonally
embedded in~$G$ as the lower right $2\times 2$ block in $\SL_n$ and
as the $2\times 2$ block corresponding to the first and the last
rows and columns in the factor~$\Sp_{2m}$. The factor $\Sp_{2m-2}$
of $H$ is embedded in the factor $\Sp_{2m}$ of $G$ as the central
$(2m-2)\times (2m-2)$ block.

Suppose that $g = (P,Q) \in G$. We denote by $P_{ij}$ the
$(i,j)$-cofactor of the matrix~$P$. The set $F$ contains functions
that are proportional to the following functions of~$g$: $f_1 =
p_{n,n-1} q_{2m,2m} - p_{n,n} q_{2m,1}$, $f_2 = p_{n,n-1} P_{1,n-1}
+ p_{nn} P_{1,n}$, $f_3 = q_{2m,1} P_{1,n-1} + q_{2m,2m} P_{1,n}$
(see~\cite[\S\,3.2, Case~4]{Avd2}). The $T$-weights of the functions
$f_1, f_2, f_3$ are $\pi_{n-1} + \varphi_1$, $\pi_1 + \pi_{n-1}$,
$\pi_1 + \varphi_1$, respectively. Let $\gamma \colon G/H \to
\mathbb C^6$ be the morphism defined by the functions $P_{1,n-1}$,
$P_{1,n}$, $p_{n,n-1}$, $p_{n,n}$, $q_{2m,1}$, $q_{2m,2m}$. Let
$N_0$ denote the subset of~$\mathbb C^6$ defined by~$f_1 = f_2 = f_3
= 0$. As can be easily seen,~$\dim N_0 = 4$. Now, given nonzero
numbers $a,b,c,d$ we consider the pair of matrices
\begin{equation*}
P_0 =
\begin{pmatrix}
0 & \dots & 0 & b^{-1}\\
\vdots & E_{n-3} & \vdots & \vdots\\
d^{-1} & \dots & 0 & 0\\
0 & \dots & bd & -ad\\
\end{pmatrix},\quad
Q_0 =
\begin{pmatrix}
a^{-1}c^{-1} & \dots & 0\\
\vdots & E_{2m-2} & \vdots\\
-bc & \dots & ac\\
\end{pmatrix}
\end{equation*}
(the dots stand for zero entries). We have $(P_0, Q_0) \in G$ for
every nonzero values of~$a,b,c,d$, and the values of the functions
$P_{1,n-1}$, $P_{1,n}$, $p_{n,n-1}$, $p_{n,n}$, $q_{2m,1}$,
$q_{2m,2m}$ on this pair are $a$, $b$, $bd$, $-ad$, $-bc$, $ac$,
respectively. Besides, it is easy to check that $\gamma ((P_0,Q_0)H)
\in N_0$. It follows that $\dim (\gamma(G/H) \cap N_0) = 4$, that
is, the set ${\gamma(G/H) \cap N_0}$ is dense in~$N_0$. Then by
Lemma~\ref{auxiliary} the codimension in $G/H$ of the subset defined
by the vanishing of the functions $f_1, f_2, f_3$ is at most~$2$.
Hence the codimension in $G/H$ of the subset defined by the
vanishing of all functions in $F$ is at most $r - 1$. Therefore the
morphism $\pi_U$ is not equidimensional.

$3^\circ$. $G = \Sp_{2n} \times \Sp_4$, $H = \Sp_{2n-4} \times
\Sp_4$, $n \ge 3$. The first factor of $H$ is embedded in the first
factor of $G$ as the central $(2n-4)\times(2n-4)$ block, and the
second factor of $H$ is diagonally embedded in~$G$, as the $4\times
4$ block in rows and columns $1$, $2$, $2n-1$, $2n$ in the first
factor.

Suppose that $g = (P,Q) \in G$. We put $R = PQ^{-1} \in \Sp_{2n}$
(the matrix $Q$ is embedded in $\Sp_{2n}$ as the $4\times 4$ block
in rows and columns $1$, $2$, $2n-1$, $2n$). Let $W$ denote the
minor of order~3 of~$R$ corresponding to the last three rows and
columns $1$, $2$, $2n$.  The set $F$ contains functions that are
proportional to the following functions of~$g$: $f_1 = r_{2n,1}$,
$f_2$~ is the minor of order~3 of~$R$ corresponding to the last
three rows and columns $1$, $2$, $2n-1$, and $f_3 = f_1 W +
f_2r_{2n,2}$ (see~\cite[\S\,3.2, Case~6]{Avd2}). Further without
loss of generality we assume that $f_1, f_2, f_3 \in F$. The
$T$-weights of the functions $f_1, f_2, f_3$ are $\pi_1 +
\varphi_1$, $\pi_3 + \varphi_1$, $\pi_1 + \pi_3 + \varphi_2$,
respectively. As the condition $f_1 = f_2 = 0$ implies $f_3 = 0$,
the subset of $G/H$ defined by the vanishing of all functions in $F$
coincides with the subset of $G/H$ defined by the vanishing of all
functions in $F \backslash \{f_3\}$. Thus the codimension of the
null fiber of the morphism $\pi_U$ is at most $r - 1$, hence $\pi_U$
is not equidimensional.

$4^\circ$. $G = \Sp_{2n} \times \Sp_{2m} \times \Sp_{2l}$, $H =
\Sp_{2n-2} \times \Sp_{2m-2} \times \Sp_{2l-2} \times \Sp_2$, $n,m,l
\ge 1$. Each of the first three factors of $H$ is embedded in the
respective factor of $G$ as the central block of the corresponding
size. The factor $\Sp_2$ of $H$ is diagonally embedded in $G$ as the
$2\times 2$ block corresponding to the first and the last rows and
columns in each factor.

Suppose that $g = (P,Q,R) \in G$. The set $F$ contains functions
proportional to the following functions of~$g$: $f_1 =
p_{2n,1}q_{2m,2m} - p_{2n,2n}q_{2m,1}$, $f_2 = q_{2m,1}r_{2l,2l} -
q_{2m,2m}r_{2l,1}$, and~$f_3 = p_{2n,1}r_{2l,2l} -
p_{2n,2n}r_{2l,1}$ (see~\cite[\S\,3.2, Case~7]{Avd2}). The
$T$-weights of the functions $f_1, f_2, f_3$ are $\pi_1 +
\varphi_1$, $\varphi_1 + \psi_1$, $\pi_1 + \psi_1$, respectively.
Let $\gamma \colon G/H \to \mathbb C^6$ be the morphism defined by
the functions $p_{2n,1}$, $p_{2n,2n}$, $q_{2m,1}$, $q_{2m,2m}$,
$r_{2l,1}$, $r_{2l,2l}$. Let $N_0$ denote the subset in~$\mathbb
C^6$ defined by~$f_1 = f_2 = f_3 = 0$. As can be easily seen,~$\dim
N_0 = 4$. Each of the functions $p_{2n,1}$, $p_{2n,2n}$, $q_{2m,1}$,
$q_{2m,2m}$, $r_{2l,1}$, $r_{2l,2l}$ is semi-invariant with respect
to the action of the group $T \times T$, where the left factor acts
on the left and the right factor acts on the right. The weights of
all these functions are linearly independent in~$\mathfrak X(T
\times T)$. It follows that for every quadruple of nonzero numbers
$a,b,c,d$ there is a triple of matrices $(P_0, Q_0, R_0) \in G$ such
that the values of the functions $p_{2n,1}$, $p_{2n,2n}$,
$q_{2m,1}$, $q_{2m,2m}$, $r_{2l,1}$, $r_{2l,2l}$ on $(P_0, Q_0,
R_0)$ are $a$, $ad$, $b$, $bd$, $c$, $cd$, respectively. Besides, it
is easy to check that $\gamma ((P_0, Q_0, R_0)H) \in N_0$ for any
nonzero values of $a,b,c,d$. This implies that $\dim (\gamma(G/H)
\cap N_0) = 4$, that is, the set ${\gamma(G/H) \cap N_0}$ is dense
in~$N_0$. Then by Lemma~\ref{auxiliary} the codimension in $G/H$ of
the subset defined by the vanishing of the functions $f_1, f_2, f_3$
is at most~$2$. Therefore the codimension in $G/H$ of the set
defined by the vanishing of all functions in $F$ is at most $r - 1$.
Hence the morphism $\pi_U$ is not equidimensional.

$5^\circ$. $G = \Sp_{2n} \times \Sp_4 \times \Sp_{2m}$, $H =
\Sp_{2n-2} \times \Sp_2 \times \Sp_2 \times \Sp_{2m-2}$, $n, m \ge
1$. The first factor of $H$ is embedded in the first factor of $G$
as the central $(2n - 2) \times (2n - 2)$ block. The fourth factor
of $H$ is similarly embedded in the third factor of~$G$. The second
factor of $H$ is diagonally embedded in the first and second factors
of $G$ as the $2\times 2$ block in the first and the last rows and
columns. The third factor of $H$ is diagonally embedded in the
second and third factors of~$G$ as the central $2\times 2$ block in
the second factor and as the $2\times 2$ block in the first and the
last rows and columns in the third factor.

Suppose that $g = (P,Q,R) \in G$. The set $F$ contains functions
that are proportional to the following functions of~$g$: $f_1 =
p_{2n,1}q_{44} - p_{2n,2n}q_{41}$, $f_2 = r_{2m,1}q_{43} -
r_{2m,2m}q_{42}$, and~$f_3 = f_2(p_{2n,1}q_{34} - p_{2n,2n}q_{31}) -
f_1(r_{2m,1}q_{33} - r_{2m,2m}q_{32})$ (see~\cite[\S\,3.2,
Case~8]{Avd2}). Further without loss of generality we assume that
$f_1, f_2, f_3 \in F$. The $T$-weights of the functions $f_1, f_2,
f_3$ are $\pi_1 + \varphi_1$, $\varphi_1 + \psi_1$, $\pi_1 +
\varphi_2 + \psi_1$, respectively. As the condition $f_1 = f_2 = 0$
implies that~$f_3 = 0$, the subset of $G/H$ defined by the vanishing
of all functions in $F$ coincides with the subset of $G/H$ defined
by the vanishing of all functions in $F \backslash \{f_3\}$. Thus
the codimension of the null fiber of the morphism $\pi_U$ is at most
$r - 1$, therefore $\pi_U$ is not equidimensional.

The proof of Proposition~\ref{prop_NE_NED} is completed.
\end{proof}

\end{document}